\newtheorem{theorem}{\bf Theorem}[section]
\newtheorem{definition}[theorem]{\bf Definition}
\newtheorem{lemma}[theorem]{\bf Lemma}
\newsavebox{\savepar}
\begin{document}
		
		\title{Elliptic Partial Differential Equation Involving a Singularity and a Radon measure}
		\author{Akasmika Panda, Sekhar Ghosh \& Debajyoti Choudhuri\footnote{Corresponding
				author: dc.iit12@gmail.com}\\
				{\small Department of Mathematics, National Institute of Technology Rourkela, India}\\ {\small Emails:  akasmika44@gmail.com, sekharghosh1234@gmail.com}}
		\date{}
		\maketitle
		
		\begin{abstract}
			\noindent The aim of this paper is to prove the existence of solution for a partial differential equation involving a singularity with a general nonnegative, Radon measure $\mu$ as its nonhomogenous term which is given as
			\begin{eqnarray}
				-\Delta u&=& f(x)h(u)+\mu~\text{in}~\Omega,\nonumber\\
				u&=&0~\text{on}~\partial\Omega,\nonumber\\
				u&>& 0~\text{on}~\Omega\nonumber,
			\end{eqnarray}
			where $\Omega$ is a bounded domain of $\mathbb{R}^N$, $f$ is a nonnegative function over $\Omega$.
			\\
			{\bf keywords}:~Elliptic PDE; Sobolev space; Schauder fixed point theorem.\\
			{\bf AMS classification}:~35J35, 35J60.
		\end{abstract}
		\section{Introduction}
		Problems involving singularity have of late become a hugely popular interest of research in the Mathematical community. A good amount of research has been done to prove the existence of a solution to the problem
		\begin{eqnarray}
			-\Delta u&=& f(x)h(u)~\text{in}~\Omega,\nonumber\\
			u&=&0~\text{on}~\partial\Omega.\label{ieq0}
		\end{eqnarray}
		A few noteworthy results on such a problem with $\Omega\subset\mathbb{R}^N$ being a bounded domain can be found in \cite{lazer}, \cite{giach1, giach2}, \cite{bocca}, \cite{tali}, \cite{gati}, \cite{arcoya}, \cite{Oliva h} and the references therein. An existence and uniqueness result has been proved by Lazer and McKenna \cite{lazer}, pertaining to the case $h(s)=\frac{1}{s^{\gamma}}$, with $f$ being a H\"{o}lder continuous function. The authors in \cite{lazer} arrived at the unicity of solution by the application of the sub-super solution method. Furthermore, the authors in \cite{lazer} have proved that the problem possesses a solution iff $\gamma<3$. They have also shown that for $\gamma>1$, solutions to the problem with infinite energy exists.
		A weaker condition on the function $f$ can be considered by picking $f$ from $L^p(\Omega)$, for $p \geq 1$, or from the space of Radon measures. Boccardo and Orsina in \cite{bocca} have proved the existence and uniqueness of a solution to a similar  problem as in \cite{lazer} but with lesser assumptions of regularity on $f$. They have considered $f\geq0$ in $\Omega$, $\gamma>0$. The existence result depends on the $L^p$ space from where $f$ has been chosen. The value of $\gamma$ also decides the space in which the solution belongs to, i.e. if $\gamma < 1$ then $u\in W_0^{1,1}(\Omega)$, if $\gamma=1$ then $u\in H_0^1(\Omega)$ and if $\gamma>1$ then $u\in H_{loc}^1(\Omega)$ where the zero Dirichlet boundary condition has been assumed in a weaker sense than the usual sense of trace. When $f$ is a bounded, Radon measure the problem may not possess a solution in general and in this case the question of nonexistence is of great importance as seen in \cite{bocca}. In \cite{cave} the authors have considered a nonlinear elliptic boundary value problem with a general singular lower order term. Here the authors have proved the existence of a distributional solution. A slight improvement of the result in \cite{lazer}, can be found in \cite{yijing}. In \cite{canino1}, a minimax method has been used to address the `jumping problem' for a singular semilinear elliptic equation. A symmetry of solutions have been shown in \cite{canino2}, for a class of semilinear equations with singular nonlinearities. In \cite{canino3}, the authors have considered quasilinear elliptic equations involving the $p$-Laplacian and singular nonlinearities. They have deduced a few comparison principles and have proved some uniqueness results. The reader may also refer to a series of noteworthy contributions made by Canino et al. in \cite{tali}, \cite{gati}, \cite{arcoya} to the semilinear elliptic problem with a singularity. It is worth mentioning the work due to Giachetti et al. \cite{giach1, giach2} and the references therein. In this paper we will prove the existence of weak solution to the following PDE.
		\begin{eqnarray}\label{ieq1}
			-\Delta u&=& f(x)h(u)+\mu~\text{in}~\Omega,\nonumber\\
			u&=&0~\text{on}~\partial\Omega,\nonumber\\
			u&>& 0~\text{on}~\Omega,
		\end{eqnarray}
		where $\Omega$ is a bounded domain in $\mathbb{R}^N$ for $N \geq 2$, $f $ is a nonnegative function and $\mu$ is a nonnegative, bounded Radon measure. We will further assume greater regularity on $f$ in $\eqref{ieq1}$ to guarantee the existence of a very weak solution.
		\subsection{Notations}
		This subsection is about the notations and definitions which will be used throughout this article. Henceforth, we will denote by $\Omega$ a bounded domain in $\mathbb{R}^N$. The Sobolev space denoted by $W^{k,p}(\Omega)$ $\cite{Evans}$ consists of all locally summable functions $u: \Omega\rightarrow \mathbb{R}$ such that for each multiindex $\alpha$ with $|\alpha|\leq k$, $D^{\alpha}u$ exists in the weak sense and belongs to $L^p(\Omega)$.	If $u\in W^{k,p}(\Omega)$, we define its norm as
		\begin{center}
			$\|u\|_{W^{k,p}(\Omega)}= 	{	\left\{
				\begin{array}{ll}
				\left(\sum_{|\alpha|\leq k}\int_\Omega |{D^{\alpha}u}|^p dx\right)^{\frac{1}{p}} &(1\leq p<\infty),\\
				\sum_{|\alpha|\leq k}\|D^{\alpha}u\|_{L^{\infty}(\Omega)} & (p=\infty).
				\end{array}
				\right. }$
		\end{center}
	 The closure of $C_c^{\infty}(\Omega)$ in $W^{k,p}(\Omega)$ is denoted by $W_0^{k,p}(\Omega)$. The local Sobolev space, $W_{loc}^{k,p}(\Omega)$, is defined to be the set of functions $u$ such that for every compact subset $K$ of $\Omega$ $u\in W^{k,p}(K)$. The H\"{o}lder Space $C^{k,\beta}(\bar{\Omega})$ with  $0<\beta\leq1$ $\cite{Evans}$, consists of all those functions $u\in C^k(\bar{\Omega})$ such that $\sum\limits_{|\alpha|\leq k}\sup|D^{\alpha}u|+\sup\limits_{x\neq y} \left\{\frac{|D^ku(x)-D^ku(y)|}{|x-y|^{\beta}}\right\}$ is finite. We will use the following truncation functions. For fixed $k > 0$
		$$T_k(s)=\max\{-k,\min\{k,s\}\}$$ and $$G_k(s)=(|s|-k)^+sign(s)$$ with $s\in \mathbb{R}$. It is easy to observe that $T_k(s)+G_k(s)=s$, for any $s\in \mathbb{R}$ and $k>0$.\\
		We will denote the space of all finite Radon measures on $\Omega$ as $\mathcal{M}(\Omega)$ endowed with the `total variation norm' which is defined as  $$\|\mu\|_{\mathcal{M}(\Omega)}=\int_{\Omega}d|\mu|.$$
	The Marcinkiewicz space  ${M}^q(\Omega)$ $\cite{Benilan}$ (or the weak $L^q(\Omega)$ space), $0 < q <\infty$, is the set of all measurable functions $f:\Omega\rightarrow \mathbb{R}$ such that the corresponding distribution satisfies an estimate of the form
		$$m(\{x\in \Omega:|f(x)|>t\})\leq \frac{C}{t^q},~t>0,~C<\infty$$, where $m$ is the Lebesgue measure. 
	We also have ${M}^q(\Omega)\subset {M}^{\bar{q}}(\Omega)$ if  $q\geq \bar{q}$, for some fixed positive $\bar{q}$. We recall here the following useful continuous embeddings
		\begin{equation}\label{mar}
			L^q(\Omega)\hookrightarrow {M}^q(\Omega)\hookrightarrow L^{q-\epsilon}(\Omega),
		\end{equation}
		for every $1<q<\infty$ and $0<\epsilon<q-1$. The article is organized as follows. In Section $2$ we state and prove the main results pertaining to the cases $0<\gamma< 1$ and $\gamma\geq 1$. In Section $3$ we will prove the existence of a solution to $\eqref{ieq1}$ in the very weak sense for $0<\gamma<1$ with two different regularity assumptions on $f$. To the end of the article, in the Appendix, we will derive a Kato type inequality corresponding to the problem defined in section 3.
		
		\section{Assumptions, Definitions and the main results}
		We consider the following boundary value problem.	
		\begin{eqnarray}\label{eq0}
			-\Delta u&=& h(u)f+\mu ~\text{in}~\Omega,\nonumber\\
			u &=& 0 ~\text{on}~ \partial\Omega,\\
			u &>& 0 ~\text{in}~ \Omega,\nonumber
		\end{eqnarray}	
		where $N>2$, $\mu$ is a nonnegative, bounded, Radon measure on $\Omega$,  $f$ is a nonnegative function in $ L^m (\Omega)$ for $m \geq 1$, which could be a measure.\\	
		The function $h : \mathbb{R}^+ \rightarrow \mathbb{R}^+$ is a nonlinear, nonincreasing, continuous function such that
		\begin{equation}\label{eqn3}
			\underset{s\rightarrow 0^+}{\lim}~ h(s)\in (0,\infty]~ \text{and}~\underset{s\rightarrow \infty}{\lim} h(s)=h(\infty)<\infty
		\end{equation}
	with the following growth condition near zero and infinity
		\begin{equation}\label{eqq1}
		\exists\, C_1, \underline{K}>0~\text{such that}~ h(s)\leq\frac{C_1}{s^\gamma}~\text{if}~s<\underline{K},	\gamma> 0,
		\end{equation}
		\begin{equation}\label{eqqq}
		\exists\, C_2, \overline{K}>0~\text{such that}~ h(s)\leq\frac{C_2}{s^\theta}~\text{if}~s>\overline{K}, \theta>0
		\end{equation}
	respectively. We will later observe that the behavior of $h$ at infinity influences the regularity of the solution $u$. Now we give two important definitions which is essential to our study of the problem in ($\ref{eq0}$).
		\begin{definition}\label{defn-1}
			Let $(\mu_n)$ be the sequence of measurable functions in $\mathcal{M}(\Omega)$. We say $(\mu_n)$ converges to $\mu\in \mathcal{M}(\Omega)$ in the sense of measure i.e. $\mu_n\rightharpoonup \mu$ in $\mathcal{M}(\Omega)$, if
			$$\int_\Omega f d\mu_n\rightarrow \int_\Omega f d\mu,~\forall f\in C_0(\Omega).$$
		\end{definition}
		\begin{definition}
			If $0<\gamma<1$, then a weak solution to the problem in ($\ref{eq0}$) is a function in $W_0^{1,1}(\Omega)$ such that
			\begin{eqnarray}\label{cond1}
				\int_{\Omega}\nabla u.\nabla\varphi&=&\int_{\Omega}fh(u)\varphi +\int_{\Omega}\varphi d\mu,~ \forall\varphi\in C_c^1(\bar\Omega)
			\end{eqnarray}
			and \begin{eqnarray}\label{cond2}\forall K\subset\subset\Omega, ~\exists~C_{K}~\text{such that}~u\geq C_{K}>0.\end{eqnarray} If $\gamma \geq 1$, then a weak solution to the problem is a function $u\in W_{loc}^{1,1}(\Omega)$ satisfying ($\ref{cond1}$) and ($\ref{cond2}$) such that $T_{k}^{\frac{\gamma+1}{2}}(u)\in W_0^{1,2}(\Omega)$ for each fixed $k>0$.
		\end{definition}
		\noindent In the subsections $\ref{sub1}$ and $\ref{sub2}$, we will prove the existence of solution to the problem ($\ref{eq0}$) for both the cases, i.e. $0<\gamma<1$ and $\gamma\geq 1$. We begin with the following  sequence of problems.
		\begin{eqnarray}\label{eq1}	
			-\Delta u_n&=& h_n\left(u_n+\frac{1}{n}\right)f_n+\mu_n ~\text{in}~\Omega,\nonumber\\
			u_n &=& 0 ~\text{on}~ \partial\Omega,	
		\end{eqnarray}
		where ($\mu_n$) is a sequence of smooth nonnegative functions bounded in $L^1(\Omega)$ that converges to $\mu$ in the sense of Definition $\ref{defn-1}$. Further, $h_n=T_n(h)$ and $f_n=T_n(f)$ are the truncations at level $n$. The weak formulation of $\eqref{eq1}$ is
		\begin{equation}\label{weak}
			\int_{\Omega} \nabla u_n  \nabla\varphi = \int_{\Omega}  h_n\left(u_n+\frac{1}{n}\right)f_n\varphi + \int_\Omega \mu_n\varphi, ~\forall\varphi\in C_c^1(\bar\Omega).
		\end{equation}
		We now prove the existence of a solution to the problem ($\ref{eq1}$) in the following lemma.
		\begin{lemma}\label{13'}
			The problem $\eqref{eq1}$ admits a nonnegative weak solution $u_n\in W_0^{1,2} (\Omega)\cap L^{\infty}(\Omega)$.
			\begin{proof}
				We will apply the Schauder's fixed point argument to prove the lemma. For a fixed $n\in \mathbb{N}$ let us define a map, $$G:L^2(\Omega)\rightarrow L^2(\Omega)$$
				such that, for any $v\in L^2(\Omega)$ we get a unique weak solution $w$ to the following problem
				\begin{eqnarray}	
					-\Delta w&=& h_n\left(|v|+\frac{1}{n}\right)f_n+\mu_n ~\text{in}~\Omega,\nonumber\\
					w &=& 0 ~\text{on}~ \partial\Omega.\label{schauder} 	
				\end{eqnarray}
				The existence of a unique $w\in W_0^{1,2}(\Omega)$ corresponding to a $v\in L^2(\Omega)$ is guaranteed due to the Lax-Milgram theorem. Thus we can choose $w$ as a test function in the weak formulation of ($\ref{schauder}$) with the test function space $W_0^{1,2}(\Omega)$. Let $\lambda_1$ be the first eigenvalue of ($-\Delta$). On using the Poincar\'{e} inequality we get
				\begin{align}\label{equation}
					\lambda_1\int _\Omega |w|^2 &\leq\int _\Omega |\nabla w|^2
					\nonumber\\& = {\int _\Omega h_n\left(|v|+\frac{1}{n}\right)f_n w} +\int _\Omega w{\mu}_n~~\text{(by the weak formulation of (\ref{schauder}))}\nonumber
					\\& \leq{ C_1 \int _{(|v|+\frac{1}{n}< \underline{K})}\frac{f_n w} {(|v|+\frac{1}{n})^\gamma}} + \max_{[\underline{K},\overline{K}]} h(s) \int_{(\underline{K}\leq (|v|+\frac{1}{n})\leq \overline{K})} f_n w \nonumber\\& ~~~~~~+ C_2 \int _{(|v|+\frac{1}{n} > \overline{K})}\frac{f_n w} {(|v|+\frac{1}{n})^\theta} + C(n) \int_\Omega|w|\nonumber\\
					& \leq { C_1 n^{1+\gamma}\int _{(|v|+\frac{1}{n}< \underline{K})}|w| + n \max_{[\underline{K},\overline{K}]} h(s) \int_{(\underline{K}\leq (|v|+\frac{1}{n})\leq \overline{K})}  |w|  + C_2n^{1+\theta} \int _{(|v|+\frac{1}{n} > \overline{K})} |w| }\nonumber
					\\& ~~~~~~+ C(n) \int_\Omega|w|\nonumber
					\\& \leq C(n,\gamma) \int _\Omega |w|\nonumber\\
					&\leq C'.C(n,\gamma)\|w\|_2~~\text{(by using the H\"{o}lder's inequality)}.
				\end{align}
				This shows that
				\begin{equation}\label{eq}
					\|w\|_{L^2(\Omega)}\leq C'.C(n,\gamma),
				\end{equation}
				where $C'$ and $C(n,\gamma)$ are independent of $v$. We will prove that the map $G$ is continuous over $L^2(\Omega)$. For this let us consider a sequence ($v_k$) that converges to $v$ with respect to the $L^2$-norm. By the dominated convergence theorem we obtain
				\begin{center}
					$\|\big(h_n\left(v_k +\frac{1}{n}\right)f_n+{\mu}_n\big)-\big(h_n\left(v +\frac{1}{n}\right)f_n+{\mu}_n\big)\|_{L^2(\Omega)}\longrightarrow 0.$
				\end{center}	
	Hence, by the uniqueness of the weak solution, we can say that $w_k=G(v_k)$ converges to $w=G(v)$ in $L^2(\Omega)$. Thus $G$ is continuous over $L^2(\Omega)$.\\
{\it Claim:}  $G(L^2(\Omega))$ is relatively compact in $L^2(\Omega)$.\\
	  We have proved in $\eqref{eq}$ that
	  \begin{align}
	  \int_\Omega |\nabla w|^2&=\int _\Omega |\nabla G(v)|^2\nonumber\\
	  & \leq C'.C(n,\gamma),\nonumber
	  \end{align}
				for any $v\in L^2(\Omega)$, so that, $G(L^2(\Omega))$ is relatively compact in $L^2(\Omega)$ by the Rellich-Kondrachov theorem. This proves that $G(L^2(\Omega))$ is relatively compact in $L^2(\Omega)$. Hence the claim.\\
	 Therefore, on applying the Schauder fixed point theorem to $G$ we guarantee an existence of a fixed point $u_n\in L^2(\Omega)$ that is a weak solution to $\eqref{eq1}$ in $W_0^{1,2}(\Omega)$. \\
	 Since  $\left(h_n\left(u_n+\frac{1}{n}\right)f_n+{\mu}_n\right) \geq 0$, hence by the maximum principle $u_n\geq 0$.
				Furthermore, for a fixed $n$, we have $u_n$ belongs to $L^{\infty}(\Omega)$ (by Th\'{e}or\`{e}me 4.2, page 215 in $\cite{Stampacchia}$) because the righthand side of ($\ref{eq1}$) is in $L^{\infty}(\Omega)$ and this concludes the proof.				 
			\end{proof}
		\end{lemma}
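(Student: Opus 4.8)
The plan is to obtain $u_n$ as a fixed point of the solution operator of the linear problem in which the nonlinearity is frozen, and then to invoke Schauder's fixed point theorem; the feature that makes this elementary is that, for $n$ fixed, the truncations $h_n = T_n(h)$ and $f_n = T_n(f)$ convert the singular data into \emph{bounded} data. Concretely, for $v \in L^2(\Omega)$ I would let $w = G(v) \in W_0^{1,2}(\Omega)$ be the unique weak solution of $-\Delta w = h_n(|v| + \tfrac{1}{n}) f_n + \mu_n$ in $\Omega$, $w = 0$ on $\partial\Omega$. Since the right-hand side lies in $L^\infty(\Omega) \subset L^2(\Omega)$ and the form $(w,\varphi) \mapsto \int_\Omega \nabla w \cdot \nabla \varphi$ is coercive on $W_0^{1,2}(\Omega)$ by the Poincar\'{e} inequality, the Lax--Milgram theorem makes $G$ well defined.

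The key step is a bound on $\|G(v)\|_{W_0^{1,2}(\Omega)}$ that is independent of $v$. Testing the equation for $w$ against $w$, using the Poincar\'{e} inequality on the left and the pointwise bound $0 \le h_n(|v| + \tfrac{1}{n}) f_n + \mu_n \le C(n)$ on the right --- here the level-$n$ truncation is essential --- and then H\"{o}lder's inequality, yields $\|w\|_{W_0^{1,2}(\Omega)} \le C(n)$ with $C(n)$ not depending on $v$. This single estimate does double duty: it shows that $G$ maps $L^2(\Omega)$ into a fixed ball of $W_0^{1,2}(\Omega)$, which by the Rellich--Kondrachov theorem is relatively compact in $L^2(\Omega)$; and, applied to differences of solutions, it will give the continuity of $G$.

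For continuity I would take $v_k \to v$ in $L^2(\Omega)$, extract a subsequence converging a.e., and use that $h_n$ is continuous and bounded and $f_n \in L^\infty(\Omega)$, so that the dominated convergence theorem gives $h_n(|v_k| + \tfrac{1}{n}) f_n \to h_n(|v| + \tfrac{1}{n}) f_n$ in $L^2(\Omega)$; linearity of the equation together with the a priori estimate applied to $G(v_k) - G(v)$ then forces $G(v_k) \to G(v)$ in $W_0^{1,2}(\Omega)$, and the usual subsequence argument upgrades this to convergence of the whole sequence. Taking $B$ to be the closed ball of radius $C(n)$ in $L^2(\Omega)$, we have $G(B) \subset B$, $G$ continuous, and $\overline{G(B)}$ compact, so Schauder's theorem yields $u_n \in L^2(\Omega)$ with $u_n = G(u_n)$, that is, $u_n \in W_0^{1,2}(\Omega)$ solving $-\Delta u_n = h_n(|u_n| + \tfrac{1}{n}) f_n + \mu_n$. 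Since the datum defining $G$ is nonnegative, the weak maximum principle gives $w \ge 0$ for every $v$, hence $u_n \ge 0$; then $|u_n| = u_n$ and $u_n$ solves \eqref{eq1}. Finally, for fixed $n$ the right-hand side $h_n(u_n + \tfrac{1}{n}) f_n + \mu_n$ is bounded on $\Omega$, so Stampacchia's $L^\infty$-regularity theorem (Th\'{e}or\`{e}me 4.2, p.~215 of \cite{Stampacchia}) gives $u_n \in L^\infty(\Omega)$, which completes the argument.

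I expect the only genuinely delicate point to be the $v$-independent a priori bound, since it simultaneously furnishes compactness of the image, continuity of $G$, and a convex set invariant under $G$; once the truncations are in place this is a routine energy estimate, and everything else is a standard chain of Lax--Milgram, Rellich--Kondrachov, Schauder, maximum principle, and Stampacchia.
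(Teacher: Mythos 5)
Your proposal is correct and follows essentially the same route as the paper: freeze the nonlinearity, solve the linearized problem by Lax--Milgram, derive a $v$-independent energy bound (the paper splits into the regions $|v|+\tfrac1n<\underline{K}$, $\in[\underline{K},\overline{K}]$, $>\overline{K}$, while you use the cruder but equally valid pointwise bound $h_n(|v|+\tfrac1n)f_n+\mu_n\le C(n)$ coming from the level-$n$ truncations), then Rellich--Kondrachov, continuity via dominated convergence, Schauder, the maximum principle for nonnegativity, and Stampacchia's theorem for the $L^\infty$ bound. Your handling of continuity (linearity plus the a priori estimate applied to $G(v_k)-G(v)$, with the a.e.-convergent subsequence extraction) and your explicit choice of an invariant closed ball are in fact slightly more careful than the paper's wording, but the argument is the same.
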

		\noindent The next step is to prove that the sequence ($u_n$) is uniformly bounded from below on every compact subset of $\Omega$.
		\begin{lemma}\label{l4}
			For every $K\subset\subset \Omega$ there exists $C_K$ such that $u_n(x)\geq C_K >0$, a.e. in $K$, for every $n\in \mathbb{N}$.
			\begin{proof} Let us consider the sequence of problems
				\begin{eqnarray}\label{eq2}	
					-\Delta v_n&=& h_n\left(v_n+\frac{1}{n}\right)f_n ~\text{in}~\Omega,\nonumber\\
					v_n &=& 0 ~\text{on}~ \partial\Omega.	
				\end{eqnarray}
				We first show the existence of a weak solution $v_n$ to the problem in  ($\ref{eq2}$) such that for every $ K \subset \subset \Omega,~\text{there exists}~ C_K~ \text{such that}~ v_n \geq C_K >0,$
				for almost every $x$ in $K$, $C_K$ being independent of $n$. The existence of a weak solution to ($\ref{eq2}$) follows from the same proof as in Lemma $\ref{13'}$. Since $0\leq f_n\leq f_{n+1}$ and $h$ is nonincreasing we have that $h_n$ is nonincreasing. Thus
				\begin{align}\label{monotonicity1}
					-\Delta v_n&=f_n h_n\left(v_n+\frac{1}{n}\right)\nonumber\\
					&\leq  f_{n+1}h_n\left(v_n+\frac{1}{n+1}\right)~ \text{in}~\Omega,\nonumber\\	
				v_n	&= 0~ \text{on}~\partial\Omega.
				\end{align}
				We also know that $v_{n+1}$ is a weak solution to
				\begin{eqnarray}\label{monotonicity2}
					-\Delta v_{n+1}&=& f_{n+1}h_{n+1}\left(v_{n+1}+\frac{1}{n+1}\right) ~\text{in}~\Omega,\nonumber\\
					v_{n+1} &=& 0 ~\text{on}~ \partial\Omega.
				\end{eqnarray}
				The difference between the weak formulations of the problems in ($\ref{monotonicity1}$), ($\ref{monotonicity2}$) with the choice of a test function as $(v_n-v_{n+1})^{+}$ we obtain
				
				\begin{eqnarray}\label{monotonicity3}
					\int_{\Omega}\nabla(v_n-v_{n+1})\cdot\nabla(v_n-v_{n+1})^{+}&=&\int_{\Omega}|\nabla(v_n-v_{n+1})^{+}|^2\nonumber\\&\leq &\int_{\Omega}f_{n+1}\left[h_n\left(v_n+\frac{1}{n+1}\right)\right.\nonumber\\
					& &\left.-h_{n+1}\left(v_{n+1}+\frac{1}{n+1}\right)\right](v_n-v_{n+1})^{+}\nonumber\\
					&= & \int_{\Omega}f_{n+1}\left[\left\{h_n\left(v_n+\frac{1}{n+1}\right)\right.\right.\nonumber\\
					& &\left.\left.-h_{n}\left(v_{n+1}+\frac{1}{n+1}\right)\right.\right\}\chi_{[v_n\leq v_{n+1}]}(v_n-v_{n+1})^{+}\nonumber\\
					& &+\left\{h_n\left(v_n+\frac{1}{n+1}\right)\right.\nonumber\\
					& &\left.\left.\left.-h_{n}\left(v_{n+1}+\frac{1}{n+1}\right)\right.\right\}\chi_{[v_n> v_{n+1}]}(v_n-v_{n+1})^{+}\right]\nonumber\\
					& = & \int_{\Omega}f_{n+1}\left\{h_n\left(v_n+\frac{1}{n+1}\right)\right.\nonumber\\
					& &\left.\left.-h_{n}\left(v_{n+1}+\frac{1}{n+1}\right)\right.\right\}\chi_{[v_n>v_{n+1}]}(v_n-v_{n+1})^{+}\nonumber\\
					&\leq & 0.
				\end{eqnarray}
				Therefore, $(v_n-v_{n+1})^{+}=0$ almost everywhere in $\Omega$, thereby implying that $v_n \leq v_{n+1}$. We again use the Th\'{e}or\`{e}me 4.2, in page 215 \cite{Stampacchia} to obtain
				\begin{eqnarray}\label{boundedness}
					\|v_1\|_{\infty} &\leq& K_1\|f_1h_1(v_1+1)\|_{\infty}+K_2\|v_1\|_{2}\nonumber\\
					& \leq & K_1+K_2=C~\text{(say)}.
				\end{eqnarray}
				Thus we have
				\begin{eqnarray}
					-\Delta v_1&=&f_1h_1(v_1+1)\nonumber\\& \geq &f_1h_1(\|v_1\|_{\infty}+1)\nonumber\\
					&\geq &f_1h_1(C+1)\nonumber\\
					&>&0.
				\end{eqnarray}
				Since $f_1h_1(C+1)$ is identically not equal to zero, hence by the strong maximum principle over $(-\Delta)$ we have $v_1>0$. Thus, by our choice of $K\subset\subset\Omega$, there exists a constant $C_K$ such that $v_1(x)\geq C_K>0$ for almost every $x\in K$.\\
				Coming back to the proof of the lemma, we take the difference between the weak formulations of $\eqref{eq1}$ and $\eqref{eq2}$ respectively with the choice of test function being $(u_n-v_n)^-$.
				It is easy to show that $u_n\geq v_n$ alomost everywhere in $\Omega$. For if not, i.e. if $u_n<v_n$ in $\Omega$, then
				\begin{align*}
					-\int_{\Omega} |\nabla(u_n-v_n)^-|^2 & =\int _{\Omega} \nabla (u_n-v_n).\nabla (u_n-v_n)^-
					\\& =\int _{\Omega} \bigg(h_n\left(u_n +\frac{1}{n}\right)-h_n\left(v_n +\frac{1}{n}\right) \bigg)f_n\cdot (u_n-v_n)^-\\
					& + \int_{\Omega} \mu_n\cdot(u_n-v_n)^- \\& \geq 0.
				\end{align*}
				This implies $u_n\geq v_n$ almost everywhere in $\Omega$ and hence in $K$.\\
				Thus we have showed that for every $K\subset\subset\Omega$ there exists $C_K$ such that $u_n\geq v_n\geq C_K>0$ almost everywhere in $K$.
			\end{proof}
		\end{lemma}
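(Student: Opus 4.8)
The plan is to bound $u_n$ from below by the solution of the purely singular problem obtained by discarding the measure, and then to exploit monotonicity in $n$ together with a strong maximum principle. Concretely, I would first introduce the auxiliary sequence $v_n \in W_0^{1,2}(\Omega)\cap L^\infty(\Omega)$ solving
\[
-\Delta v_n = h_n\!\left(v_n + \tfrac1n\right) f_n \ \text{ in } \Omega, \qquad v_n = 0 \ \text{ on } \partial\Omega,
\]
whose solvability follows verbatim from the Schauder fixed point argument of Lemma \ref{13'}, since removing $\mu_n \ge 0$ only shrinks the source term. The target is then the chain $u_n \ge v_n \ge v_1 \ge C_K$ on $K$.

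Next I would establish the monotonicity $v_n \le v_{n+1}$ a.e. in $\Omega$. Because $0 \le f_n \le f_{n+1}$ and $h$ is nonincreasing (so that each $h_n = T_n(h)$ is nonincreasing and $h_n \le h_{n+1}$ pointwise), $v_n$ is a subsolution of the equation solved by $v_{n+1}$. Subtracting the two weak formulations and testing with $(v_n - v_{n+1})^+ \in W_0^{1,2}(\Omega)$, the left-hand side is $\int_\Omega |\nabla (v_n - v_{n+1})^+|^2$, while the right-hand side, after inserting the intermediate term $h_n(v_{n+1} + \tfrac1{n+1})$ and splitting the integral over $\{v_n \le v_{n+1}\}$ and $\{v_n > v_{n+1}\}$, is seen to be $\le 0$ by monotonicity of $h_n$. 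Hence $(v_n - v_{n+1})^+ = 0$, so $v_n \le v_{n+1}$ and in particular $v_n \ge v_1$ for all $n$. To see $v_1 > 0$ in $\Omega$, I would use the Stampacchia $L^\infty$ bound $\|v_1\|_\infty \le C$ to write $-\Delta v_1 = f_1 h_1(v_1 + 1) \ge f_1 h_1(C+1) \ge 0$, a source that is not identically zero; the strong maximum principle for $-\Delta$ then gives $v_1 > 0$ in $\Omega$, so every $K \subset\subset \Omega$ admits $C_K > 0$ with $v_1 \ge C_K$ a.e. on $K$.

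Finally I would compare $u_n$ with $v_n$. Subtracting the weak formulations of the $u_n$–problem \eqref{eq1} and the $v_n$–problem above and testing with $(u_n - v_n)^- \in W_0^{1,2}(\Omega)$, the difference of the singular terms contributes with the favorable sign (again monotonicity of $h_n$), and $\int_\Omega \mu_n (u_n - v_n)^- \ge 0$ since $\mu_n \ge 0$. This forces $\int_\Omega |\nabla (u_n - v_n)^-|^2 \le 0$, hence $u_n \ge v_n$ a.e. in $\Omega$. Combining, $u_n \ge v_n \ge v_1 \ge C_K$ a.e. on $K$, uniformly in $n$, which is the assertion.

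I expect the main obstacle to be the monotonicity step $v_n \le v_{n+1}$: since the truncation level inside $h$ and the regularizing shift $\tfrac1n$ move simultaneously, the sign of the difference of the two nonlinear terms is not transparent, and the argument genuinely requires the intermediate quantity $h_n(v_{n+1}+\tfrac1{n+1})$ and a careful case split on the sign of $v_n - v_{n+1}$. A secondary point requiring care is verifying that the source in the $v_1$–equation is not identically zero, so that the strong maximum principle indeed applies; this rests on $f \not\equiv 0$ together with $h > 0$ on $(0,\infty)$.
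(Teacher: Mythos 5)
Your proposal is correct and follows essentially the same route as the paper: the same auxiliary purely singular problems for $v_n$, the same monotonicity argument via the test function $(v_n-v_{n+1})^{+}$ with the intermediate term $h_n\left(v_{n+1}+\frac{1}{n+1}\right)$, the Stampacchia bound plus strong maximum principle to get $v_1\geq C_K>0$ on $K$, and the comparison $u_n\geq v_n$ via the test function $(u_n-v_n)^{-}$ using $\mu_n\geq 0$. No substantive differences to report.
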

		\noindent Using the results proved so far we will now prove the existence of a solution to the problem in $\eqref{eq0}$. In order to do this we divide the problem into the following two cases.
		\subsection{The case of $0<\gamma < 1$}\label{sub1}
		In this subsection, we consider the problem in $\eqref{eq1}$ for the case of $0<\gamma<1$.
		\begin{lemma}\label{l1}
			Let $u_n$ be a solution to the problem ($\ref{eq1}$), where $h$ satisfies ($\ref{eqq1}$) and ($\ref{eqqq}$), with $0<\gamma<1$. Then $(u_n)$ is bounded in $W_0^{1,q}(\Omega)$ for every $q<\frac{N}{N-1}$.
			\begin{proof}
				We follow the arguments used in $\cite{Benilan}$ to prove this lemma. We will first prove that ($\nabla u_n$) is bounded in $M^\frac{N}{N-1}(\Omega)$.	For this, we take $\varphi =T_k(u_n)$ as a test function in the weak formulation of $\eqref{eq1}$ and get
				\begin{equation}\label{eq3}
					\int_\Omega |\nabla T_k(u_n)|^2   \leq \int_\Omega h_n\left(u_n +\frac{1}{n}\right)T_k(u_n)f_n + \int_\Omega T_k(u_n) \mu_n.
				\end{equation}
				Now, $\frac{T_k(u_n)}{(u_n+\frac{1}{n})^{\gamma}}\leq\frac{u_n}{(u_n+\frac{1}{n})^{\gamma}}=\frac{u_n^{\gamma}}{(u_n+\frac{1}{n})^{\gamma}u_n^{\gamma-1}}\leq u_n^{1-\gamma}$.\\
				Using $\eqref{eqq1}$ and $\eqref{eqqq}$ in the right hand side of $\eqref{eq3}$ we have,
				\begin{align}\label{m}
					{\int_\Omega h_n\left(u_n +\frac{1}{n}\right)f_n T_k(u_n)}
					& \leq{ C_1 \int _{(u_n +\frac{1}{n}< \underline{K})}\frac{f_n T_k(u_n)} {(u_n +\frac{1}{n})^\gamma}} + \max_{[\underline{K},\overline{K}]} h(s) \int_{(\underline{K}\leq (u_n+\frac{1}{n})\leq \overline{K})} f_n T_k(u_n) \nonumber \\& + C_2 \int _{(u_n+\frac{1}{n} > \overline{K})}\frac{f_n T_k(u_n)} {(u_n+\frac{1}{n})^\theta}\nonumber\\&\leq  C_1\underline{K}^{1-\gamma} \int _{(u_n+\frac{1}{n}< \underline{K})}f + k \max_{[\underline{K},\overline{K}]} h(s) \int_{(\underline{K}\leq (u_n+\frac{1}{n})\leq \overline{K})}f\nonumber\\&+ \frac{C_2 k}{\overline{K}^{\theta}} \int _{(u_n+\frac{1}{n} > \overline{K})} f\nonumber \\&\leq Ck
				\end{align}
				and 
				\begin{align}\label{n}
				\int_\Omega T_k(u_n) \mu_n& \leq k\|\mu_n\|_{L^1(\Omega)} \nonumber\\
				&\leq Ck. 
				\end{align}
				Using the inequalities $\eqref{m}$ and $\eqref{n}$ in $\eqref{eq3}$, we obtain
				\begin{equation}\label{e}
					\int_\Omega |\nabla T_k(u_n)|^2  \leq Ck.
				\end{equation}
				Consider
				\begin{align*}
					\{|\nabla u_n|\geq  t\} & = \{|\nabla u_n|\geq  t,u_n< k\} \cup \{|\nabla u_n| \geq t,u_n \geq  k\}
					\\& \subset \{|\nabla u_n|\geq  t,u_n <k\} \cup \{u_n \geq k\}\subset \Omega.
				\end{align*}
				Then using the subadditivity property of Lesbegue measure $m$ we have,
				\begin{equation}\label{ee1}
					m(	\{|\nabla u_n|\geq t\}) \leq m(\{|\nabla u_n|\geq t,u_n< k\}) + m(\{u_n \geq  k\}).
				\end{equation}
				Therefore, from the Sobolev inequality 
				\begin{align}\label{f}
				\Bigg(\int_\Omega |T_k(u_n)|^{2^*}\Bigg)^{\frac{2}{2^*}}&\leq\frac{1}{\lambda_1} \int_{\Omega}|\nabla T_k(u_n)|^2 \nonumber\\
				&\leq Ck,
				\end{align}
				where $\lambda_1$ is the first eigenvalue of the Laplacian operator. By restricting the left hand side of $\eqref{e}$ over $I_1=\{|\nabla u_n|\geq  t,u_n< k\}$, we get
				\begin{align}
					m(\{|\nabla u_n|\geq  t,u_n< k\})&\leq \frac{1}{t^2}\int_\Omega |\nabla T_k(u_n)|^2\nonumber\\
					&\leq \frac{Ck}{t^2}, ~\forall k>1. \nonumber
				\end{align}
				 Again by restricting the integral on the left hand side of $\eqref{f}$ over $I_2=\left\lbrace x\in\Omega:u_n\geq k \right\rbrace$, in which $T_k(u_n)=k$, we obtain $$k^2m(\{u_n\geq k\})^{\frac{2}{2^*}}\leq Ck.$$
				This implies $$m(\{u_n\geq k\})\leq \frac{C}{k^\frac{N}{N-2}},~ \forall k\geq1.$$
				Hence, $(u_n)$ is bounded in $M^{\frac{N}{N-2}}(\Omega)$. Now $\eqref{ee1}$ becomes
				\begin{align}
					m(	\{|\nabla u_n|\geq t\})& \leq m(\{|\nabla u_n|\geq t,u_n< k\}) + m(\{u_n \geq k\})\nonumber\\
					&\leq \frac{Ck}{t^2} + \frac{C}{k^\frac{N}{N-2}},~ \forall k>1.\nonumber
				\end{align}
		On choosing $k=t^{\frac{N-2}{N-1}}$, we get $$ m(\{|\nabla u_n|\geq t\})\leq \frac{C}{t^\frac{N}{N-1}},~ \forall t\geq 1.$$
	We have thus proved that $(\nabla u_n)$ is bounded in $M^{\frac{N}{N-1}}(\Omega)$. Therefore, the property in $\eqref{mar}$ implies that $(u_n)$ is bounded in $W_0^{1,q}$ for every $q<\frac{N}{N-1}$.			
	\end{proof}
		\end{lemma}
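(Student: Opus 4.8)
The plan is to run the classical a priori estimates for elliptic equations with $L^1$ or measure data in the spirit of \cite{Benilan}, the one new ingredient being a uniform-in-$n$ control of the singular term $f_n h_n\!\left(u_n+\frac{1}{n}\right)$, which is precisely where the hypothesis $0<\gamma<1$ enters. In outline: (i) take $\varphi=T_k(u_n)$ in the weak formulation \eqref{weak} of \eqref{eq1} to obtain an energy estimate of the form $\int_\Omega|\nabla T_k(u_n)|^2\le Ck$ with $C$ independent of $n$ (for $k\ge 1$); (ii) insert this into the Sobolev inequality to get a Marcinkiewicz bound for $u_n$, namely $m(\{u_n\ge k\})\le Ck^{-N/(N-2)}$; (iii) split the level set $\{|\nabla u_n|\ge t\}$ as $\{|\nabla u_n|\ge t,\ u_n<k\}\cup\{u_n\ge k\}$, control the first piece by Chebyshev's inequality together with the energy estimate and the second by step (ii), and then optimise over $k$ to arrive at $m(\{|\nabla u_n|\ge t\})\le Ct^{-N/(N-1)}$, that is, $(\nabla u_n)$ is bounded in $M^{N/(N-1)}(\Omega)$; (iv) conclude from the embedding $M^{N/(N-1)}(\Omega)\hookrightarrow L^{N/(N-1)-\varepsilon}(\Omega)$ recorded in \eqref{mar} that $(u_n)$ is bounded in $W_0^{1,q}(\Omega)$ for every $q<N/(N-1)$.

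For step (i), since $u_n\in W_0^{1,2}(\Omega)\cap L^\infty(\Omega)$ by Lemma \ref{13'} and the right-hand side of \eqref{eq1} is bounded, the weak formulation extends to test functions in $W_0^{1,2}(\Omega)\cap L^\infty(\Omega)$, so $\varphi=T_k(u_n)$ is admissible and the left-hand side becomes $\int_\Omega|\nabla T_k(u_n)|^2$. The measure term is controlled at once: $\int_\Omega T_k(u_n)\,\mu_n\le k\|\mu_n\|_{L^1(\Omega)}\le Ck$. For the singular term I would decompose $\Omega$ according to whether $u_n+\frac{1}{n}$ lies below $\underline{K}$, in $[\underline{K},\overline{K}]$, or above $\overline{K}$, and use \eqref{eqq1}, the continuity (hence boundedness) of $h$ on $[\underline{K},\overline{K}]$, and \eqref{eqqq} respectively. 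On the first region the decisive inequality is
\[
\frac{T_k(u_n)}{\left(u_n+\frac{1}{n}\right)^\gamma}\le\frac{u_n}{\left(u_n+\frac{1}{n}\right)^\gamma}\le u_n^{\,1-\gamma}\le\underline{K}^{\,1-\gamma},
\]
which is finite exactly because $1-\gamma>0$; it bounds that contribution by $C_1\underline{K}^{1-\gamma}\|f\|_{L^1(\Omega)}$, a constant free of $k$ and $n$. On the other two regions $T_k(u_n)\le k$ and $h$ is bounded, so those contributions are $\le Ck\|f\|_{L^1(\Omega)}$; here $\|f\|_{L^1(\Omega)}<\infty$ since $f\in L^m(\Omega)$ with $m\ge1$ and $|\Omega|<\infty$. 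Collecting the pieces gives $\int_\Omega|\nabla T_k(u_n)|^2\le Ck$ for all $k\ge1$.

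Steps (ii) and (iii) are the usual Stampacchia/Marcinkiewicz bookkeeping. From $\big(\int_\Omega|T_k(u_n)|^{2^*}\big)^{2/2^*}\le C\int_\Omega|\nabla T_k(u_n)|^2\le Ck$, restricting the left-hand side to $\{u_n\ge k\}$, where $T_k(u_n)\equiv k$, gives $k^2\,m(\{u_n\ge k\})^{2/2^*}\le Ck$, hence $m(\{u_n\ge k\})\le Ck^{-N/(N-2)}$ for $k\ge1$ and, in particular, $(u_n)$ bounded in $M^{N/(N-2)}(\Omega)$. For the gradient, Chebyshev's inequality applied to $\int_\Omega|\nabla T_k(u_n)|^2$ bounds $m(\{|\nabla u_n|\ge t,\ u_n<k\})$ by $Ck/t^2$, and combining this with $m(\{u_n\ge k\})\le Ck^{-N/(N-2)}$ and balancing the two terms through the choice $k=t^{(N-2)/(N-1)}$ yields $m(\{|\nabla u_n|\ge t\})\le Ct^{-N/(N-1)}$ for $t\ge1$; the embedding \eqref{mar} then completes the proof.

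I expect the only genuinely delicate point to be the uniform-in-$n$ estimate of the singular term in step (i). Since $f_n h_n\!\left(u_n+\frac{1}{n}\right)$ may blow up pointwise as $n\to\infty$, the bound cannot rely on crude pointwise estimates; it must exploit the algebraic structure $T_k(u_n)\left(u_n+\frac{1}{n}\right)^{-\gamma}\le u_n^{1-\gamma}$, and this is exactly where $\gamma<1$ is used, making $u_n^{1-\gamma}$ bounded on the set where $u_n+\frac{1}{n}$ is small. (When $\gamma\ge1$ this fails, which is why that case needs a separate treatment based on the power truncations $T_k^{\frac{\gamma+1}{2}}(u_n)$ and the local lower bound of Lemma \ref{l4}.)
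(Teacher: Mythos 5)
Your proposal is correct and follows essentially the same route as the paper: the test function $T_k(u_n)$, the three-region splitting of the singular term with the key bound $T_k(u_n)(u_n+\tfrac{1}{n})^{-\gamma}\le u_n^{1-\gamma}\le \underline{K}^{1-\gamma}$ (where $\gamma<1$ enters), the resulting estimate $\int_\Omega|\nabla T_k(u_n)|^2\le Ck$, the Sobolev/Chebyshev level-set argument with the choice $k=t^{(N-2)/(N-1)}$ to get the $M^{N/(N-1)}$ bound on $\nabla u_n$, and the embedding \eqref{mar} to conclude. No substantive differences from the paper's argument.
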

		\begin{theorem}\label{t1}
			There exists a weak solution $u$ of $\eqref{eq0}$ in $W_0^{1,q}(\Omega)$ for every $q<\frac{N}{N-1}$.
		\end{theorem}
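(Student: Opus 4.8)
The plan is to pass to the limit in the approximating problems \eqref{eq1} using the uniform estimates of Lemma~\ref{l1} together with the lower bound of Lemma~\ref{l4}. First I would invoke Lemma~\ref{l1}: since $(u_n)$ is bounded in $W_0^{1,q}(\Omega)$ for every $q<\frac{N}{N-1}$, a diagonal/reflexivity argument produces a subsequence (not relabelled) and a function $u\in W_0^{1,q}(\Omega)$ with $u_n\rightharpoonup u$ weakly in $W_0^{1,q}(\Omega)$ for all such $q$, and by Rellich--Kondrachov $u_n\to u$ strongly in $L^1(\Omega)$ and a.e.\ in $\Omega$. Lemma~\ref{l4} then gives that for each $K\subset\subset\Omega$ there is $C_K>0$ with $u_n\geq C_K$ a.e.\ in $K$ for all $n$, so passing to the limit $u\geq C_K>0$ a.e.\ in $K$; in particular $u>0$ a.e.\ in $\Omega$, and \eqref{cond2} holds for $u$.

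Next I would fix a test function $\varphi\in C_c^1(\bar\Omega)$ and pass to the limit term by term in the weak formulation \eqref{weak}. For the diffusion term, $\nabla u_n\rightharpoonup\nabla u$ weakly in $L^q$ and $\nabla\varphi\in L^\infty$, so $\int_\Omega\nabla u_n\cdot\nabla\varphi\to\int_\Omega\nabla u\cdot\nabla\varphi$. For the measure term, $\mu_n\rightharpoonup\mu$ in the sense of Definition~\ref{defn-1} and $\varphi\in C_c^1(\bar\Omega)\subset C_0(\Omega)$, so $\int_\Omega\mu_n\varphi\to\int_\Omega\varphi\,d\mu$. The delicate term is the singular one $\int_\Omega h_n(u_n+\tfrac1n)f_n\varphi$. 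Here I would argue as follows: on the support $\operatorname{supp}\varphi$, which is a compact subset $K$ of $\Omega$, Lemma~\ref{l4} gives $u_n+\tfrac1n\geq u_n\geq C_K>0$, and since $h$ is nonincreasing and continuous with $h(\infty)<\infty$ (using \eqref{eqn3}), we have $0\leq h_n(u_n+\tfrac1n)\leq h(C_K)<\infty$ uniformly on $K$; combined with $f_n\to f$ in $L^m(\Omega)$ and $f_n\leq f$, the integrand is dominated by $h(C_K)\,f\,\|\varphi\|_\infty\in L^1(K)$. Since $u_n\to u$ a.e.\ and $h_n=T_n(h)\to h$ pointwise with $h$ continuous, $h_n(u_n+\tfrac1n)\to h(u)$ a.e.\ on $K$, and the dominated convergence theorem yields $\int_\Omega h_n(u_n+\tfrac1n)f_n\varphi\to\int_\Omega h(u)f\varphi$. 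This establishes \eqref{cond1} for $u$.

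Finally, since $0<\gamma<1$, the two conditions \eqref{cond1} and \eqref{cond2} together with $u\in W_0^{1,q}(\Omega)\subset W_0^{1,1}(\Omega)$ are precisely the definition of a weak solution, completing the proof. I expect the main obstacle to be the justification of the passage to the limit in the singular term: one must be careful that the uniform positivity $u_n\geq C_K$ is only available on compact subsets (which is exactly why test functions are restricted to $C_c^1(\bar\Omega)$), and that the a.e.\ convergence $h_n(u_n+\tfrac1n)\to h(u)$ genuinely holds — this uses continuity of $h$ and the truncation $h_n\to h$ locally uniformly on compact subsets of $(0,\infty)$, which in turn relies on $h(\infty)<\infty$ so that $h$ is bounded on $[C_K,\infty)$. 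A secondary point worth checking is that the limit $u$ is finite a.e.\ (so that $h(u)$ makes sense), which follows from $u\in W_0^{1,q}(\Omega)\subset L^1(\Omega)$.
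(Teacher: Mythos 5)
Your proposal is correct and follows essentially the same route as the paper: extract a weakly convergent subsequence from the bound of Lemma~\ref{l1}, use Rellich--Kondrachov for a.e.\ convergence, dominate the singular term on $\operatorname{supp}\varphi$ via the local lower bound of Lemma~\ref{l4}, and pass to the limit in the measure term by Definition~\ref{defn-1}. The only cosmetic difference is that you dominate $h_n(u_n+\tfrac1n)$ by $h(C_K)$ using monotonicity of $h$, whereas the paper splits into the three regions of the growth conditions \eqref{eqq1}--\eqref{eqqq}; both yield the same $L^1$ majorant.
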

			\begin{proof}	
				With the assumptions made in Lemma $\ref{l1}$, there exists $u$ such that the sequence ($u_n$) converges weakly to $u$ in $W_0^{1,q}(\Omega)$ for every $q<\frac{N}{N-1}$. This implies that for $\varphi$ in $C_c^1(\bar{\Omega})$
				$$\lim_{n\rightarrow \infty} \int_{\Omega} \nabla u_n . \nabla\varphi = \int_{\Omega}\nabla u .\nabla\varphi.$$
				In addition to this, by the compact embedding we conclude that $u_n$ converges to $u$ strongly in $L^1(\Omega)$ and hence pointwise upto a subsequence almost everywhere in $\Omega$. Thus, for $\varphi$ belonging to $C_c^1(\bar{\Omega})$, we have
				\begin{align*}
					0 & \leq |h_n\left(u_n+\frac{1}{n}\right) f_n\varphi|\\& \leq	{\left\{\begin{array}{ll}
							\frac{C_1\parallel \varphi \parallel_{L^\infty(\Omega)}f}{C_K^\gamma}, & \text{if \space}   u_n+\frac{1}{n}< \underline{K} \\
							{M\parallel \varphi \parallel_{L^\infty(\Omega)}f}, & \text{if \space}   \underline{K}\leq u_n+\frac{1}{n}\leq \overline{K}~\\
							\frac{C_2\parallel \varphi \parallel_{L^\infty(\Omega)}f}{C_K^\theta}, & \text{if \space}   u_n+\frac{1}{n}> \overline{K}
						\end{array}
						\right. }
				\end{align*}
				where, $M>0$ and $K$ is the set $\{x\in\Omega:\varphi(x)\neq0\}$. On applying the dominated convergence theorem we get
				$$\lim_{n\rightarrow \infty} \int_{\Omega}  h_n\left(u_n+\frac{1}{n}\right) f_n\varphi = \int_{\Omega}h( u) f\varphi.$$
				Hence, on passing the limit $n\rightarrow\infty$ in the last term of $\eqref{weak}$ involving $\mu_n$, we obtain a weak solution $u$ of $\eqref{eq0}$ in $W_0^{1,q}(\Omega)$ for every $q<\frac{N}{N-1}$. This completes the proof.
			\end{proof}
		
		\subsection{The case of $\gamma \geq 1$}\label{sub2}
		Since this is a strongly singular case, we can obtain local estimates on $u_n$ in the Sobolev space. We will globally estimate $ \left(T_k^{\frac{\gamma+1}{2}}(u_n)\right)$ in $W_0^{1,2}(\Omega)$ with the aim of giving a sense to the boundary values of $u$ at least in a weaker sense when compared to the trace sence.
		\begin{lemma}\label{l2}
			Let $u_n$ be a solution of $\eqref{eq1}$ with $\gamma \geq 1$. Then $\left(T_k^{\frac{\gamma+1}{2}}(u_n)\right)$ is bounded in $W_0^{1,2}(\Omega)$ for every fixed $k>0$.
			\begin{proof}
				Consider $\varphi=T_k^\gamma(u_n)$ as a test function in $\eqref{eq1}$. We have
				\begin{equation}\label{eq4}
					\gamma\int_\Omega \nabla u_n.\nabla T_k(u_n)T_k^{\gamma-1}(u_n) =\int_{\Omega} h_n\left(u_n+\frac{1}{n}\right)f_nT_k^\gamma(u_n) + \int_{\Omega}T_k^\gamma(u_n)\mu_n.
				\end{equation}
				Since, $\gamma\geq 1$ and by the definition of $T_k(u_n)$, we estimate the term on the left hand side of $\eqref{eq4}$ as
				\begin{equation}\label{g}
				\gamma\int_\Omega \nabla u_n.\nabla T_k(u_n)T_k^{\gamma-1}(u_n)\geq\gamma\int_\Omega |\nabla T_k^{\frac{\gamma+1}{2}}(u_n)|^2.
				\end{equation}
				Recal that $\frac{T_k^\gamma(u_n)}{(u_n+\frac{1}{n})^\gamma}\leq \frac{u_n^\gamma}{(u_n+\frac{1}{n})^\gamma}\leq 1$, then the term on the right hand side of $\eqref{eq4}$ can be estimated as
				\begin{align}\label{h}
					\int_{\Omega} h_n\left(u_n+\frac{1}{n}\right)f_nT_k^\gamma(u_n) + \int_{\Omega}T_k^\gamma(u_n)\mu_n
					& \leq{ C_1 \int _{(u_n +\frac{1}{n}< \underline{K})}\frac{f_n T_k^\gamma(u_n)} {(u_n +\frac{1}{n})^\gamma}} + C_2 \int _{(u_n+\frac{1}{n} > \overline{K})}\frac{f_n T_k^\gamma(u_n)} {(u_n+\frac{1}{n})^\theta} \nonumber  \\& + \max_{[\underline{K},\overline{K}]} h(s) \int_{(\underline{K}\leq (u_n+\frac{1}{n})\leq \overline{K})} f_n T_k^\gamma(u_n) + k^\gamma \int_{\Omega} \mu_n\nonumber
					\\&  \leq  C_1 \int _{(u_n+\frac{1}{n}< \underline{K})}f +  \frac{C_2 k^\gamma}{\overline{K}^{\theta}} \int _{(u_n+\frac{1}{n} > \overline{K})} f \nonumber \\& +  k^\gamma \max_{[\underline{K},\overline{K}]} h(s) \int_{(\underline{K}\leq (u_n+\frac{1}{n})\leq \overline{K})}f + k^\gamma \int_\Omega \mu_n\nonumber
					\\& \leq C(k,\gamma)k^\gamma.
				\end{align}
				On combining the inequalities in $\eqref{g}$ and $\eqref{h}$, we get
				\begin{equation}\label{k}
					\int_{\Omega} |\nabla T_k^{\frac{\gamma+1}{2}}(u_n)|^2 \leq Ck^\gamma.
				\end{equation}
				Therefore,  $\left(T_k^{\frac{\gamma+1}{2}}(u_n)\right)$ is bounded in  $W_0^{1,2}(\Omega)$ for every fixed $k>0$.
			\end{proof}
		\end{lemma}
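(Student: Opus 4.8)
The plan is to use $\varphi=T_k^{\gamma}(u_n)$ as a test function in the weak formulation \eqref{weak} of \eqref{eq1}. Since Lemma \ref{13'} gives $u_n\in W_0^{1,2}(\Omega)\cap L^{\infty}(\Omega)$ and the right-hand side of \eqref{eq1} lies in $L^{\infty}(\Omega)\subset L^2(\Omega)$, both sides of \eqref{weak} are continuous in $\varphi$ with respect to the $W_0^{1,2}(\Omega)$ norm, so \eqref{weak} extends by density from $C_c^1(\bar\Omega)$ to all of $W_0^{1,2}(\Omega)$; and because $\gamma\ge 1$ the function $s\mapsto T_k^{\gamma}(s)$ is Lipschitz on $[0,\infty)$ and vanishes at $0$, whence $T_k^{\gamma}(u_n)\in W_0^{1,2}(\Omega)$ is admissible. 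Using $\nabla T_k(u_n)=\nabla u_n\,\chi_{\{u_n<k\}}$ and the chain rule $\nabla T_k^{\frac{\gamma+1}{2}}(u_n)=\frac{\gamma+1}{2}T_k^{\frac{\gamma-1}{2}}(u_n)\nabla T_k(u_n)$, the left-hand side becomes the exact identity
\[
\int_\Omega\nabla u_n\cdot\nabla T_k^{\gamma}(u_n)=\gamma\int_\Omega T_k^{\gamma-1}(u_n)\,|\nabla T_k(u_n)|^2=\frac{4\gamma}{(\gamma+1)^2}\int_\Omega|\nabla T_k^{\frac{\gamma+1}{2}}(u_n)|^2,
\]
so that, up to a constant depending only on $\gamma$, the quantity we wish to bound sits on the left.

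For the right-hand side I would split $\Omega$ according to the three regimes controlling $h$. On $\{u_n+\frac1n<\underline{K}\}$ the decisive cancellation is $\dfrac{T_k^{\gamma}(u_n)}{(u_n+\frac1n)^{\gamma}}\le\dfrac{u_n^{\gamma}}{(u_n+\frac1n)^{\gamma}}\le1$, so by \eqref{eqq1} this contribution is at most $C_1\int_\Omega f$; this is the step that forces the exponent of the truncation in $\varphi$ to match the singularity exponent $\gamma$. On $\{\underline{K}\le u_n+\frac1n\le\overline{K}\}$ one uses $h_n\le\max_{[\underline{K},\overline{K}]}h$ and $T_k^{\gamma}(u_n)\le k^{\gamma}$; on $\{u_n+\frac1n>\overline{K}\}$ one uses \eqref{eqqq}, so $h_n\le C_2\overline{K}^{-\theta}$, again with $T_k^{\gamma}(u_n)\le k^{\gamma}$. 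The measure term is $\int_\Omega T_k^{\gamma}(u_n)\mu_n\le k^{\gamma}\|\mu_n\|_{L^1(\Omega)}$, which is bounded because $(\mu_n)$ is bounded in $L^1(\Omega)$. Since $f\in L^m(\Omega)\subset L^1(\Omega)$ and $\Omega$ is bounded, all four pieces are finite and bounded, uniformly in $n$, by a constant of the form $C(\gamma)\max\{1,k^{\gamma}\}$.

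Combining the two sides gives $\int_\Omega|\nabla T_k^{\frac{\gamma+1}{2}}(u_n)|^2\le Ck^{\gamma}$ with $C$ independent of $n$. Since $0\le T_k^{\frac{\gamma+1}{2}}(u_n)\le k^{\frac{\gamma+1}{2}}$ pointwise and $\Omega$ has finite measure, $T_k^{\frac{\gamma+1}{2}}(u_n)$ is also bounded in $L^2(\Omega)$; together with the gradient bound and the fact, noted above, that $T_k^{\frac{\gamma+1}{2}}(u_n)\in W_0^{1,2}(\Omega)$, this yields the asserted uniform bound in $W_0^{1,2}(\Omega)$ for each fixed $k>0$.

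The two points requiring care are (i) the admissibility of the non-smooth test function $T_k^{\gamma}(u_n)$, which is routine but genuinely uses the $L^{\infty}$ regularity of $u_n$ from Lemma \ref{13'}, and (ii) the estimate of the singular integral near $\{u_n\approx0\}$, where it is essential that the power of the truncation is exactly $\gamma$. I would also flag that the bound degenerates like $k^{\gamma}$ as $k\to\infty$, which is precisely why no global $W_0^{1,2}$ bound on $u_n$ itself is available in this strongly singular regime and why the zero boundary datum for the limit must be read off through the truncations rather than through the trace; the positivity on compact subsets needed for \eqref{cond2} has already been supplied by Lemma \ref{l4}.
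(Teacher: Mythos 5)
Your proposal is correct and follows essentially the same route as the paper: test with $T_k^{\gamma}(u_n)$, use the cancellation $T_k^{\gamma}(u_n)/(u_n+\tfrac1n)^{\gamma}\le 1$ on the singular set, split the remaining region according to \eqref{eqq1} and \eqref{eqqq}, and bound the measure term by $k^{\gamma}\|\mu_n\|_{L^1(\Omega)}$. Your extra care is welcome: the density argument justifying the non-smooth test function is left implicit in the paper, and your exact chain-rule identity with the factor $\tfrac{4\gamma}{(\gamma+1)^2}$ is the correct form of the paper's inequality \eqref{g} (whose stated constant $\gamma$ is slightly off for $\gamma>1$, harmlessly so since only positivity of the constant matters).
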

		\noindent In order to pass the limit $n\rightarrow\infty$ in the weak formulation  $\eqref{weak}$, we require local estimates on $(u_n)$. We prove the following lemma.
		\begin{lemma}\label{l3}
			Let $u_n$ be a solution of $\eqref{eq1}$ with $\gamma\geq1$. Then ($u_n$) is bounded in $W_{loc}^{1,q}(\Omega)$ for every $q<\frac{N}{N-1}$.
			\begin{proof}
				We prove this theorem in two steps.\\
				$\boldmath{\text{Step 1.}}$ We claim that $\left(G_1(u_n)\right)$ is bounded in  $W_0^{1,q}(\Omega)$ for every $q<\frac{N}{N-1}$.\\
				It is apparent that $G_1(u_n)=0$, when $0\leq u_n\leq 1$ and $G_1(u_n)=u_n-1$, when $u_n>1$. So $\nabla G_1(u_n)=\nabla u_n$ for $u_n>1$.\\
				Now, we need to show that $\left(\nabla G_1(u_n)\right)$ is bounded in the Marcinkiewicz space $M^{\frac{N}{N-1}}(\Omega)$. We observe
				\begin{align*}
					\{|\nabla u_n|> t, u_n>1\} & = \{|\nabla u_n|> t,1<u_n\leq k+1\} \cup \{|\nabla u_n| > t,u_n > k+1\}
					\\& \subset \{|\nabla u_n|> t,1<u_n\leq k+1\} \cup \{u_n > k+1\}\subset \Omega.
				\end{align*}
				Hence, by the subadditivity of Lebesgue measure $m$, we have
				\begin{equation}\label{eq5}
					m(	\{|\nabla u_n|> t,u_n>1\}) \leq m(\{|\nabla u_n|> t,1<u_n\leq k+1\}) + m(\{u_n > k+1\}).
				\end{equation}
				In order to estimate $\eqref{eq5}$ we take $\varphi=T_k(G_1(u_n))$, for $k>1$, as a test function in $\eqref{eq1}$. We observe that $\nabla T_k(G_1(u_n))= \nabla u_n$ only when $1<u_n \leq k+1$, otherwise it is equal to zero and  $T_k(G_1(u_n))=0$ when $ u_n\leq 1$. Thus we have
				\begin{align}\label{i}
					\int_\Omega |\nabla T_k(G_1(u_n))|^2 & \leq \int_\Omega h_n\left(u_n+\frac{1}{n}\right)f_nT_k(G_1(u_n)) + \int_{\Omega}T_k(G_1(u_n))\mu_n\nonumber
					\\& \leq{ C_1 \int _{(u_n +\frac{1}{n}< \underline{K})}\frac{f_n T_k(G_1(u_n))} {(u_n +\frac{1}{n})^\gamma}} + \max_{[\underline{K},\overline{K}]} h(s) \int_{(\underline{K}\leq (u_n+\frac{1}{n})\leq \overline{K})} f_n T_k(G_1(u_n)) \nonumber \\& + C_2 \int _{(u_n+\frac{1}{n} > \overline{K})}\frac{f_n T_k(G_1(u_n))} {(u_n+\frac{1}{n})^\theta}+k\int_\Omega\mu_n\nonumber
					\\&  \leq { C_1k \int _{(u_n+\frac{1}{n}< \underline{K})}\frac{f_n}{(1+\frac{1}{n})^\gamma} + k \max_{[\underline{K},\overline{K}]} h(s) \int_{(\underline{K}\leq (u_n+\frac{1}{n})\leq \overline{K})}f_n}\nonumber \\& + \frac{C_2 k}{\overline{K}^{\theta}} \int _{(u_n+\frac{1}{n} >
						\overline{K})} f_n +k\int_\Omega\mu_n\nonumber
					\\& \leq Ck.
				\end{align}
				By restricting the integral in $\eqref{i}$ over $J_1={\left\lbrace 1<u_n\leq k+1 \right\rbrace}$, we get
					\begin{align}
					\int_{\left\lbrace 1<u_n\leq k+1 \right\rbrace} |\nabla T_k(G_1(u_n))|^2 \nonumber& = \int_{\left\lbrace 1<u_n\leq k+1 \right\rbrace} |\nabla u_n|^2\nonumber  \\& \geq \int_{\left\lbrace |\nabla u_n|>t, 1<u_n\leq k+1 \right\rbrace} |\nabla u_n|^2\nonumber \\&\geq t^2m(\{|\nabla u_n|> t,1<u_n\leq k+1\}).\nonumber
				\end{align}
				Thus, $$m(\{|\nabla u_n|> t,1<u_n\leq k+1\})\leq \frac{Ck}{t^2}, ~\forall k \geq 1.$$
				According to $\eqref{k}$ in the proof of Lemma $\ref{l2}$, one can see that
				$$\int_{\Omega} |\nabla T_k^{\frac{\gamma+1}{2}}(u_n)|^2 \leq Ck^\gamma,~ \forall k>1.$$
				Therefore, from the Sobolev inequality \begin{align}\label{j}
				\Bigg(\int_\Omega |T_k^{\frac{\gamma+1}{2}}(u_n)|^{2^*}\Bigg)^{\frac{2}{2^*}}&\leq \frac{1}{\lambda_1}\int_{\Omega}|\nabla T_k^{\frac{\gamma+1}{2}}(u_n)|^2\nonumber\\ &\leq Ck^{\gamma},
				\end{align}
				where $\lambda_1$ is the first eigenvalue of the Laplacian operator.
			By restricting the integral on the left hand side of $\eqref{j}$ over $J_2=\{x: u_n(x)> k+1\}$, we obtain $$k^{\gamma+1}m(\{u_n>k+1\})^{\frac{2}{2^*}}\leq Ck^{\gamma}$$
				so that $$m(\{u_n>k+1\})\leq \frac{C}{k^\frac{N}{N-2}},~ \forall k\geq1.$$
				So, $(u_n)$ is bounded in $M^{\frac{N}{N-2}}(\Omega)$, i.e. $(G_1(u_n))$ is also bounded in $M^{\frac{N}{N-2}}(\Omega)$.\\
				Now from $\eqref{eq5}$, we have
				\begin{eqnarray}m(\{|\nabla u_n|> t,u_n>1\}) &\leq & m(\{|\nabla u_n|> t,1<u_n\leq k+1\}) + m(\{u_n > k+1\})\nonumber\\&\leq& \frac{Ck}{t^2} + \frac{C}{k^\frac{N}{N-2}}, \forall k>1.\nonumber
				\end{eqnarray}
				On choosing $k=t^{\frac{N-2}{N-1}}$ we get  $$ m(\{|\nabla u_n|> t,u_n>1\})\leq \frac{C}{t^\frac{N}{N-1}},~ \forall t\geq 1.$$
				We thus proved that $(\nabla u_n)=(\nabla G_1(u_n))$ is bounded in $M^{\frac{N}{N-2}}(\Omega)$. Hence, by the property in ($\ref{mar}$), we conclude that $(G_1(u_n))$ is bounded in $W_0^{1,q}(\Omega)$ for every $q<\frac{N}{N-1}$.\\			
				$\boldmath{\text{Step 2.}}$ We claim that $(T_1(u_n))$ is bounded in $W_{loc}^{1,2}(\Omega)$.\\
			To prove this claim	we need to examine the behaviour, for small values, of $u_n$ for each $n$. For this we first prove that for every $K\subset\subset \Omega$,
				\begin{equation}\label{eq7}
					\int_K |\nabla T_1(u_n)|^2 \leq C.
				\end{equation}
				We have already proved in Lemma $\ref{l4}$ that $u_n\geq C_K>0$ on $K\subset\subset\Omega$. On using $\varphi=T_1^\gamma(u_n)$ as a test function in $\eqref{weak}$, we get
				\begin{align}\label{eq8}
					\int_\Omega \nabla u_n . \nabla T_1(u_n) T_1^{\gamma-1}(u_n)&= \int_\Omega h_n\left(u_n+\frac{1}{n}\right)f_nT_1^{\gamma}(u_n) +\int_\Omega T_1^{\gamma}(u_n)\mu_n\nonumber\\
					& \leq C.
				\end{align}
			We observe that
				\begin{align}\label{eq9}
					\int_\Omega \nabla u_n . \nabla T_1(u_n) T_1^{\gamma-1}(u_n)  & \geq\int_K |\nabla T_1(u_n)|^2 T_1^{\gamma-1}(u_n)\nonumber\\
					& \geq  C_K^{\gamma-1}\int_{K}|\nabla T_1(u_n)|^2.
				\end{align}
			Inequalities $\eqref{eq8}$ and $\eqref{eq9}$ together yields $\eqref{eq7}$. \\
			Since $u_n=T_1(u_n)+G_1(u_n)$, we conclude that ($u_n$) is bounded in $W_{loc}^{1,q}(\Omega)$ for every $q<\frac{N}{N-1}$.
			\end{proof}
		\end{lemma}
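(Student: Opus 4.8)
The plan is to follow the standard ``small part / large part'' splitting $u_n = T_1(u_n) + G_1(u_n)$, estimating $G_1(u_n)$ globally in $W_0^{1,q}(\Omega)$ and $T_1(u_n)$ only locally in $W^{1,2}(\Omega)$; adding the two bounds on an arbitrary $K\subset\subset\Omega$ and using that $W^{1,2}(K)\hookrightarrow W^{1,q}(K)$ for $q<2$ then yields the assertion. The degeneracy of $u_n$ near $\partial\Omega$ in the range $\gamma\geq 1$ is precisely why only a local $W^{1,2}$-estimate on the small part is available.

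\emph{Large part.} On the set $\{u_n>1\}$ one has $u_n+\frac1n>1$, so $h_n(u_n+\frac1n)$ is bounded there by a constant depending only on $C_1$, $C_2$, $\overline{K}$ and $\max_{[\underline{K},\overline{K}]}h$. Taking $\varphi=T_k(G_1(u_n))$ with $k>1$ as a test function in \eqref{weak}, the right-hand side is controlled by $k\big(\|f\|_{L^1(\Omega)}+\|\mu_n\|_{L^1(\Omega)}\big)\leq Ck$, which gives $\int_\Omega|\nabla T_k(G_1(u_n))|^2\leq Ck$. Restricting this integral to $\{|\nabla u_n|>t,\ 1<u_n\leq k+1\}$ produces $m(\{|\nabla u_n|>t,\ 1<u_n\leq k+1\})\leq Ck/t^2$. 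For the tail $\{u_n>k+1\}$ I would invoke Lemma \ref{l2}: since $T_k^{(\gamma+1)/2}(u_n)$ is bounded in $W_0^{1,2}(\Omega)$, the Sobolev inequality together with $T_k^{(\gamma+1)/2}(u_n)=k^{(\gamma+1)/2}$ on $\{u_n>k\}$ give $m(\{u_n>k+1\})\leq C/k^{N/(N-2)}$. By subadditivity $m(\{|\nabla G_1(u_n)|>t\})\leq Ck/t^2+C/k^{N/(N-2)}$; optimizing with $k=t^{(N-2)/(N-1)}$ yields $m(\{|\nabla G_1(u_n)|>t\})\leq C/t^{N/(N-1)}$, i.e. $(\nabla G_1(u_n))$ is bounded in $M^{N/(N-1)}(\Omega)$, and \eqref{mar} then shows $(G_1(u_n))$ is bounded in $W_0^{1,q}(\Omega)$ for every $q<N/(N-1)$.

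\emph{Small part.} Testing \eqref{weak} with $\varphi=T_1^\gamma(u_n)$ gives $\int_\Omega \nabla u_n\cdot\nabla T_1(u_n)\,T_1^{\gamma-1}(u_n)=\int_\Omega h_n(u_n+\frac1n)f_nT_1^\gamma(u_n)+\int_\Omega T_1^\gamma(u_n)\mu_n$, whose right-hand side is bounded by a constant since $T_1^\gamma(u_n)/(u_n+\frac1n)^\gamma\leq 1$, $T_1^\gamma(u_n)\leq 1$, $f\in L^1(\Omega)$ and $\|\mu_n\|_{L^1(\Omega)}\leq C$. Given $K\subset\subset\Omega$, Lemma \ref{l4} furnishes $C_K>0$ with $u_n\geq C_K$ a.e.\ in $K$; because $\gamma-1\geq 0$ the map $s\mapsto s^{\gamma-1}$ is nondecreasing, so $T_1^{\gamma-1}(u_n)\geq(\min\{C_K,1\})^{\gamma-1}=:c_K>0$ on $K$, and therefore $c_K\int_K|\nabla T_1(u_n)|^2\leq C$. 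Hence $(T_1(u_n))$ is bounded in $W^{1,2}(K)$ for every $K\subset\subset\Omega$.

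Finally, on any $K\subset\subset\Omega$ and $q<N/(N-1)<2$, Hölder's inequality gives $\|T_1(u_n)\|_{W^{1,q}(K)}\leq |K|^{1/q-1/2}\|T_1(u_n)\|_{W^{1,2}(K)}\leq C_K$, while $\|G_1(u_n)\|_{W^{1,q}(K)}\leq\|G_1(u_n)\|_{W_0^{1,q}(\Omega)}\leq C$; adding these and using $u_n=T_1(u_n)+G_1(u_n)$ shows $(u_n)$ is bounded in $W^{1,q}(K)$, i.e. in $W_{loc}^{1,q}(\Omega)$. I expect the main obstacle to be the large-part Marcinkiewicz estimate — specifically, correctly importing the global bound of Lemma \ref{l2} to control $m(\{u_n>k+1\})$ and performing the optimization in $k$ — whereas the small-part estimate is routine once Lemma \ref{l4} is available.
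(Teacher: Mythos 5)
Your proposal is correct and follows essentially the same route as the paper: the decomposition $u_n=T_1(u_n)+G_1(u_n)$, the test functions $T_k(G_1(u_n))$ and $T_1^\gamma(u_n)$, the use of Lemma \ref{l2} plus Sobolev for the level-set tail, the optimization $k=t^{(N-2)/(N-1)}$ in the Marcinkiewicz estimate, and Lemma \ref{l4} for the local lower bound. Your treatment is in fact slightly cleaner in two spots (bounding $h_n$ directly by its value at $1$ on $\{u_n>1\}$ instead of the three-region splitting, and using $\min\{C_K,1\}^{\gamma-1}$ where the paper writes $C_K^{\gamma-1}$), but these are presentational, not substantive, differences.
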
	
		\noindent We now state and prove the existence result.
		\begin{theorem}
			Let $\gamma\geq1$. Then there exists a weak solution $u$ of $\eqref{eq0}$ in $W_{loc}^{1,q}(\Omega)$ for every $q<\frac{N}{N-1}$.
			\begin{proof}
				The proof of this theorem is a straightforward application of the results in Theorem $\ref{t1}$, Lemma $\ref{l2}$ and Lemma $\ref{l3}$.
			\end{proof}
		\end{theorem}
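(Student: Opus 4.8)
The plan is to pass to the limit $n\to\infty$ in the weak formulation $\eqref{weak}$, reusing the compactness already extracted in the previous lemmas. First I would invoke Lemma $\ref{l3}$ to conclude that $(u_n)$ is bounded in $W_{loc}^{1,q}(\Omega)$ for every $q<\frac{N}{N-1}$; combining this with a diagonal argument over an exhaustion $K_1\subset\subset K_2\subset\subset\cdots$ of $\Omega$ yields a function $u\in W_{loc}^{1,q}(\Omega)$ and a subsequence (not relabelled) with $u_n\rightharpoonup u$ weakly in $W^{1,q}(K)$ for each $K\subset\subset\Omega$, hence $u_n\to u$ strongly in $L^1_{loc}(\Omega)$ and pointwise a.e. in $\Omega$ up to a further subsequence. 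Fix $\varphi\in C_c^1(\bar\Omega)$ and let $K=\{x:\varphi(x)\neq 0\}$, which is compactly contained in $\Omega$; then the weak convergence on $K$ gives $\int_\Omega\nabla u_n\cdot\nabla\varphi\to\int_\Omega\nabla u\cdot\nabla\varphi$.

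For the singular term I would argue exactly as in Theorem $\ref{t1}$: on the support $K$ of $\varphi$ we have, by Lemma $\ref{l4}$, a constant $C_K>0$ with $u_n\geq C_K$ a.e. on $K$ for all $n$, so the integrand $h_n(u_n+\tfrac1n)f_n\varphi$ is dominated by a fixed $L^1$ function (built piecewise from $\tfrac{C_1\|\varphi\|_\infty f}{C_K^\gamma}$, $M\|\varphi\|_\infty f$, $\tfrac{C_2\|\varphi\|_\infty f}{C_K^\theta}$ according to the three ranges of $u_n+\tfrac1n$, using $f\in L^m(\Omega)\subset L^1(\Omega)$). Since $h_n\uparrow h$, $f_n\uparrow f$ and $u_n\to u$ a.e., continuity of $h$ gives $h_n(u_n+\tfrac1n)f_n\varphi\to h(u)f\varphi$ a.e., and the dominated convergence theorem yields $\int_\Omega h_n(u_n+\tfrac1n)f_n\varphi\to\int_\Omega h(u)f\varphi$. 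For the measure term, $\mu_n\rightharpoonup\mu$ in the sense of Definition $\ref{defn-1}$ and $\varphi\in C_c^1(\bar\Omega)\subset C_0(\Omega)$ give $\int_\Omega\varphi\,\mu_n\to\int_\Omega\varphi\,d\mu$ directly. Passing to the limit in all three terms of $\eqref{weak}$ establishes $\eqref{cond1}$.

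It remains to check the qualitative requirements in the definition of weak solution for $\gamma\geq 1$. The strict positivity $\eqref{cond2}$, i.e. $u\geq C_K>0$ on each $K\subset\subset\Omega$, passes to the limit from the uniform bound $u_n\geq C_K$ in Lemma $\ref{l4}$. For the boundary condition in its weaker form I would use Lemma $\ref{l2}$: for each fixed $k>0$, $\big(T_k^{(\gamma+1)/2}(u_n)\big)$ is bounded in $W_0^{1,2}(\Omega)$, and since $T_k^{(\gamma+1)/2}(u_n)\to T_k^{(\gamma+1)/2}(u)$ a.e. and in $L^2(\Omega)$ (the functions are uniformly bounded by $k^{(\gamma+1)/2}$), the weak $W_0^{1,2}$-limit must be $T_k^{(\gamma+1)/2}(u)$, so $T_k^{(\gamma+1)/2}(u)\in W_0^{1,2}(\Omega)$ for every $k>0$, as required. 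Finally the regularity $u\in W_{loc}^{1,q}(\Omega)$ for $q<\frac{N}{N-1}$ is precisely the content of Lemma $\ref{l3}$ transferred to $u$ by lower semicontinuity of the norm under weak convergence.

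The main obstacle is the identification of the weak limits of the nonlinear singular term and of the truncations as the expected expressions $h(u)f$ and $T_k^{(\gamma+1)/2}(u)$ — this rests entirely on having a uniform, $n$-independent lower bound for $u_n$ away from zero on compact sets (Lemma $\ref{l4}$), which is what legitimizes the domination and defeats the singularity; everything else is the routine bookkeeping of diagonal extraction and lower semicontinuity. Since the statement itself advertises the proof as "a straightforward application" of Theorem $\ref{t1}$, Lemma $\ref{l2}$ and Lemma $\ref{l3}$, no genuinely new estimate is needed.
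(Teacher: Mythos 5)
Your proposal is correct and follows exactly the route the paper intends: it combines the limit-passage argument of Theorem \ref{t1} (dominated convergence on the singular term via the uniform lower bound of Lemma \ref{l4}, plus the measure convergence) with the local estimates of Lemma \ref{l3} and the identification $T_k^{(\gamma+1)/2}(u)\in W_0^{1,2}(\Omega)$ from Lemma \ref{l2}. The paper compresses all of this into a one-line remark, so your write-up is simply a detailed version of the same proof.
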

		\section{Further discussion of the case $0<\gamma<1$.}
		In this section we will consider $\Omega$ that has a boundary $\partial\Omega$ of class $C^{2,\beta}$ for some $0<\beta<1$. We consider the following semilinear elliptic problem
		\begin{eqnarray}\label{e1}	
			-\Delta u&=& h(u)f+\mu ~\text{in}~\Omega,\nonumber\\
			u &=& 0 ~\text{on}~ \partial\Omega,	
		\end{eqnarray}
		where, $0<\gamma<1$, $f\in C^{\beta} (\bar{\Omega})$ such that $f>0$ in $\bar{\Omega}$ and $\mu$ is a nonnegative, bounded, Radon measure on $\Omega$. We will show the existence of a nonnegative very weak solution to the problem $\eqref{e1}$. Before proving this we give a few definitions.
		\begin{definition}\label{d1}
			A  very weak solution to problem $\eqref{e1}$ is a function $u\in L^1(\Omega)$ such that $u>0$ a.e. in $\Omega$, $fh(u) \in L^1(\Omega) $ and
			\begin{equation}\label{e2}
				-\int_{\Omega}u\Delta \varphi=\int_{\Omega}h(u)f\varphi +\int_{\Omega}\varphi d\mu,~ \forall\varphi\in C_0^2({\bar{\Omega}}).
			\end{equation}
		\end{definition}
		\begin{definition}\label{d2}
			A function $\underline{u}$ is a subsolution for $\eqref{e1}$ if $\underline{u}\in L^1(\Omega)$,  $\underline{u}>0$ in $\Omega$, $fh(\underline{u})\in L^1(\Omega)$
			and
			\begin{equation}\label{e3}
				-\int_{\Omega}\underline{u}\Delta \varphi\leq\int_{\Omega}h(\underline{u})f\varphi +\int_{\Omega}\varphi d\mu,~ \forall\varphi\in C_0^2({\bar{\Omega}}),~ \varphi\geq0.
			\end{equation}
			Equivalently, $\bar{u}$ is said to be a supersolution for the problem $\eqref{e1}$ if $\bar{u}\in L^1(\Omega)$,    $\bar{u}>0$ in $\Omega$, $fh(\bar{u})\in L^1(\Omega)$ and
			\begin{equation}\label{e6}
				-\int_{\Omega}\bar{u}\Delta \varphi\geq\int_{\Omega}h(\bar{u})f\varphi +\int_{\Omega}\varphi d\mu,~ \forall\varphi\in C_0^2({\bar{\Omega}}),~\varphi\geq0.
			\end{equation}
		\end{definition}
	\noindent We now prove the following two theorems in order to guarantee the existence of a nonnegative solution to $\eqref{e1}$ in the sense of Definition $\ref{d1}$.
		\begin{theorem}\label{t2}
			Let $\underline{u}$ be a subsolution  and $\bar{u}$ be a supersolution to the problem in $\eqref{e1}$ with $\underline{u}\leq\bar{u}$ in
			$\Omega$, then there exists a solution $u$ to $\eqref{e1}$ in the sense of Definition $\ref{d1}$ such that $\underline{u}\leq u\leq \bar{u}$.
			\begin{proof}
				We will follow the arguments due to Montenegro \& Ponce $\cite{Ponce}$.
				We define $\bar{g}:\Omega \times \mathbb{R}\rightarrow \mathbb{R}$ as
				$$\bar{g}(x,t)=	\left\{
				\begin{array}{ll}
				f(x)h(\underline{u}(x)) & \text{if}~ t<\underline{u}(x),\\
				f(x)h(t) & \text{if}~ \underline{u}(x)\leq t\leq \bar{u}(x),\\
				f(x)h(\bar{u}(x)) & \text{if}~ t>\bar{u}(x).
				\end{array}
				\right. $$
				Moreover, $\underline{u}>0$ and hence $\bar{g}$ is well defined a.e. in $\Omega$. For each fixed $v\in L^1(\Omega)$ we have that $\bar{g}(x,v(x))\in L^1(\Omega)$. We divide the proof into two steps.\\
				$\boldmath{\text{Step 1.}}$ We claim that if $u$ satisfies
				\begin{eqnarray}\label{e4}	
					-\Delta u&=& \bar{g}(x,u)+\mu ~\text{in}~\Omega,\nonumber\\
					u &=& 0 ~\text{on}~ \partial\Omega,	
				\end{eqnarray}
				then $\underline{u}\leq u\leq \bar{u}$. Thus $\bar{g}(.,u)=fh(u)\in L^1(\Omega)$ and $u$ is a solution to $\eqref{e1}$.\\
				The very weak formulation of $\eqref{e4}$ is given by
				\begin{equation}\label{e5}
					-\int_{\Omega}u\Delta \varphi= \int_{\Omega}\bar{g}(x,u)\varphi +\int_\Omega\varphi d\mu,~\forall\varphi\in C_0^2(\bar\Omega).
				\end{equation}
				We need to prove that $u\leq\bar{u}$ in $\Omega$. The proof of the other side of the inequality, $\underline{u}\leq u$, follows similarly.\\
				We will show that $u$ is a solution to $\eqref{e4}$ and $\bar{u}$ is a supersolution to $\eqref{e1}$. Subtracting equation $\eqref{e5}$ from $\eqref{e6}$ we have, for every $\varphi \in C_0^2(\bar{\Omega})$ such that $\varphi \geq0$,
				\begin{align}
				-\int_{\Omega}(u-\bar{u})\Delta \varphi&\leq \int_{\Omega}\left(\bar{g}(x,u)-fh(\bar{u})\right)\varphi \nonumber\\
				&=\int_{\Omega}\chi_{\{u\leq \bar{u}\}}\left(\bar{g}(x,u)-fh(\bar{u})\right)\varphi.\nonumber
				\end{align}
				On applying the Kato type inequality from the Appendix, we get
				\begin{align}
				\int_{\Omega}(u-\bar{u})^+& \leq \int_{\Omega}\chi_{\{u\leq \bar{u}\}} \left(\bar{g}(x,u)-fh(\bar{u})\right) (sign_+(u-\bar{u}))\varphi\nonumber\\
				&=0.\nonumber
				\end{align}
				This further implies that
				$$\int_{\Omega}(u-\bar{u})^+\leq 0.$$
				Thus $u\leq\bar{u}$ a.e. in $\Omega$. Similarly, one can show that $\underline{u}\leq u$ a.e. in $\Omega$. Thus we have proved that $\underline{u}\leq u\leq\bar{u}$ a.e. in $\Omega$.\\
				$\boldmath{\text{Step 2.}}$ Now we prove that a solution to the problem in $\eqref{e4}$ exists. Let us define
				$$G:L^1(\Omega)\rightarrow L^1(\Omega).$$ 
				The map $G$ is so defined that it assigns to every $v\in L^1(\Omega)$ a solution $u$ to the following linear problem
				\begin{eqnarray}	
					-\Delta u&=& \bar{g}(x,v)+\mu ~\text{in}~\Omega,\nonumber\\
					u &=& 0 ~\text{on}~ \partial\Omega.\label{linprob}	
				\end{eqnarray}
				The problem in ($\ref{linprob}$) admits a unique solution for a given Radon measure due to \cite{Stampacchia}.
				We need to show that this map is continuous in $L^1(\Omega)$. Let us choose a sequence $(v_n)$ converging
				to some function $v$ in $L^1(\Omega)$. Then by the definition of $\bar{g}$ and $h$ being a nonincreasing, continuous function we get
				$$|\bar{g}(x,v_n(x))|\leq fh(\underline{u}).$$
				Hence, using the dominated convergence theorem, we conclude that
				$$\|\bar{g}(x, v_n)-\bar{g}(x, v)\|_{L^1(\Omega)}\rightarrow 0.$$ By $\cite{bhakta}$, the linear problem $(\ref{linprob})$ has a unique very weak solution corresponding to this $v$. Thus
				\begin{eqnarray}
					\lim_{n\rightarrow\infty}\left(-\int_{\Omega}u_n\Delta\varphi\right) &=& \lim_{n\rightarrow\infty}\int_{\Omega}fh(v_n)\varphi+\int_{\Omega}\varphi d\mu\nonumber\\
					&=&\int_{\Omega}fh(v)\varphi+\int_{\Omega}\varphi d\mu\nonumber\\
					&=& -\int_{\Omega}u\Delta\varphi.\nonumber
				\end{eqnarray}
				Hence,  $u = G(v)$. It can be seen from Th\'{e}or\`{e}me 9.1 in \cite{Stampacchia} that
				\begin{align}
				\|u_n-u\|_{L^1(\Omega)}&\leq\|u_n-u\|_{W_0^{1,q}(\Omega)}\nonumber\\
				&\leq \|(\bar{g}(x,v_n)+\mu)-(\bar{g}(x,v)+\mu)\|_{\mathcal{M}(\Omega)}\nonumber\\
				&=\|\bar{g}(x,v_n)-\bar{g}(x,v)\|_{L^1(\Omega)}\rightarrow 0, ~\text{(as)}~n\rightarrow\infty. \nonumber
				\end{align} 
				Hence, $\|u_n-u\|_{L^1(\Omega)}=\|G(v_n)-G(v)\|_{L^1(\Omega)}\rightarrow 0$ as $n\rightarrow\infty.$
				 Therefore, $G$ is continuous.\\
				We are still left to prove that the set $G(L^1(\Omega))$ is bounded and relatively compact in $L^1(\Omega)$. For every $v\in L^1(\Omega)$ we have
				\begin{align}
					\parallel\bar{g}(x,v)+\mu\parallel_{\mathcal{M}(\Omega)}& \leq \parallel\bar{g}(x,v)\parallel_{\mathcal{M}(\Omega)}+\parallel \mu\parallel_{\mathcal{M}(\Omega)}\nonumber\\
					&\leq ~\parallel f h(\underline{u})\parallel_{L^1(\Omega)}+\parallel\mu\parallel_{\mathcal{M}(\Omega)}.\nonumber
				\end{align}
				Again, by Th\'{e}or\`{e}me 9.1 in $\cite{Stampacchia}$, we see that $G(v)$ is bounded in $W_0^{1,q}(\Omega)$ for every $q<\frac{N}{N-1}$. Therefore, by the Rellich-Kondrachov theorem  we get $G(L^1(\Omega))$ is bounded and hence relatively compact in $L^1(\Omega)$.\\
			We now apply the Schauder fixed point theorem  to see that $G$ has a fixed point $u\in L^1(\Omega)$. This fixed point of $G$ is a very weak solution to the problem $\eqref{e1}$. Also by step 1 we have that this solution $u$ satisfies $\underline{u}\leq u\leq \bar{u}$ a.e. in $\Omega$.
			\end{proof}
		\end{theorem}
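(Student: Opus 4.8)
The plan is to remove the singularity of $h$ by freezing the nonlinearity on the order interval $[\underline u,\bar u]$, to solve the resulting non-singular problem by a Schauder fixed point argument in $L^1(\Omega)$, and then to recover the original equation by a comparison principle. First I would introduce the truncated datum
$$\bar g(x,t)=f(x)\,h\big(\min\{\max\{t,\underline u(x)\},\bar u(x)\}\big),$$
so that $\bar g(x,t)=f(x)h(\underline u(x))$ for $t<\underline u(x)$, $\bar g(x,t)=f(x)h(t)$ for $\underline u(x)\le t\le\bar u(x)$, and $\bar g(x,t)=f(x)h(\bar u(x))$ for $t>\bar u(x)$. Since $\underline u>0$ a.e.\ and $h$ is continuous on $(0,\infty)$, $\bar g$ is a Carath\'eodory function, and since $h$ is nonincreasing we have the uniform bound $0\le\bar g(x,t)\le f(x)h(\underline u(x))$, with $fh(\underline u)\in L^1(\Omega)$ by the definition of a subsolution; hence $\bar g(\cdot,v(\cdot))\in L^1(\Omega)$ for every $v\in L^1(\Omega)$ with an $L^1$-norm independent of $v$.

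Next I would define $G\colon L^1(\Omega)\to L^1(\Omega)$ by letting $G(v)$ be the unique very weak solution of $-\Delta u=\bar g(x,v)+\mu$ in $\Omega$, $u=0$ on $\partial\Omega$; existence, uniqueness and the a priori estimate $\|G(v)\|_{W_0^{1,q}(\Omega)}\le C\big(\|fh(\underline u)\|_{L^1(\Omega)}+\|\mu\|_{\mathcal M(\Omega)}\big)$ for every $q<\tfrac{N}{N-1}$ follow from Stampacchia's duality theory for elliptic problems with measure data (cf.\ \cite{Stampacchia}). Continuity of $G$ holds because $v_n\to v$ in $L^1(\Omega)$ forces, along any subsequence with a.e.\ convergence, $\bar g(\cdot,v_n)\to\bar g(\cdot,v)$ a.e.\ and, by dominated convergence with majorant $fh(\underline u)$, in $L^1(\Omega)$ for the whole sequence; the continuous-dependence estimate then gives $G(v_n)\to G(v)$ in $W_0^{1,q}(\Omega)\hookrightarrow L^1(\Omega)$. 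The uniform $W_0^{1,q}$-bound together with the Rellich-Kondrachov theorem makes $G(L^1(\Omega))$ relatively compact in $L^1(\Omega)$, so taking $\mathcal K$ to be its closed convex hull we have $G(\mathcal K)\subset\mathcal K$ with $\mathcal K$ compact and convex, and Schauder's fixed point theorem yields $u\in L^1(\Omega)$ with $u=G(u)$, i.e.\ $u$ solves $-\Delta u=\bar g(x,u)+\mu$ in the very weak sense with zero boundary value.

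The step I expect to be the main obstacle is showing $\underline u\le u\le\bar u$ a.e., since $u,\underline u,\bar u$ are merely $L^1$ and cannot be used as test functions; this is precisely where the Kato-type inequality established in the Appendix is needed. For the upper bound, subtracting the very weak identity for $u$ from the supersolution inequality \eqref{e6} for $\bar u$ gives, for every $0\le\varphi\in C_0^2(\bar\Omega)$,
$$-\int_\Omega(u-\bar u)\,\Delta\varphi\le\int_\Omega\big(\bar g(x,u)-fh(\bar u)\big)\varphi=\int_\Omega\chi_{\{u\le\bar u\}}\big(\bar g(x,u)-fh(\bar u)\big)\varphi,$$
the last equality because $\bar g(x,u)=fh(\bar u)$ on $\{u>\bar u\}$ by construction. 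The Kato-type inequality applied to $u-\bar u$ then forces $\int_\Omega(u-\bar u)^+\le 0$, since $\mathrm{sign}_+(u-\bar u)\,\chi_{\{u\le\bar u\}}=0$; hence $u\le\bar u$ a.e., and $\underline u\le u$ follows symmetrically, subtracting the subsolution inequality \eqref{e3} for $\underline u$ from the identity for $u$ and using $\bar g(x,u)=fh(\underline u)$ on $\{u<\underline u\}$. Once $\underline u\le u\le\bar u$ is known, $\bar g(x,u(x))=f(x)h(u(x))$ a.e., so $fh(u)\in L^1(\Omega)$, the very weak formulation for $u$ becomes exactly \eqref{e2}, and $u\ge\underline u>0$ a.e.; thus $u$ is a very weak solution of \eqref{e1} in the sense of Definition~\ref{d1} with $\underline u\le u\le\bar u$, which is the assertion.
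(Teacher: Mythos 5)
Your proposal is correct and takes essentially the same route as the paper's proof: the same truncation $\bar g$, the same fixed point map $G$ on $L^1(\Omega)$ with Stampacchia's measure-data estimates, dominated convergence for continuity, Rellich--Kondrachov for compactness, Schauder's theorem for existence, and the same application of the Appendix's Kato-type inequality to obtain $\underline u\le u\le\bar u$. The only differences are cosmetic: you perform the comparison step after the existence step and pass to the closed convex hull of $G(L^1(\Omega))$ before invoking Schauder, which if anything slightly tightens the argument.
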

		\begin{theorem}\label{t3}
			There exists a nonnegative solution to the problem $\eqref{e1}$ in the sense of Definition $\ref{d1}$.
			\begin{proof}
				We will apply the Theorem $\ref{t2}$ for which we find a subsolution and a supersolution
				to the problem $\eqref{e1}$ in the sense of Definition $\ref{d2}$. We first find a subsolution. Let us consider the problem
				\begin{eqnarray}\label{e7}	
					-\Delta v&=& h(v)f ~\text{in}~\Omega,\nonumber\\
					v &=& 0 ~\text{on}~ \partial\Omega.
				\end{eqnarray}
				The existence of a very weak solution in $L^1(\Omega)$ to the problem in ($\ref{e7}$) can be proved by the arguments used in the Theorem $\ref{t2}$, i.e. by using the Schauder fixed point theorem. Consider the eigenfunction $\phi_1>0$ of ($-\Delta$) corresponding to the smallest eigenvalue $\lambda_1$ with $\phi_1|_{\partial\Omega}=0$ \cite{Evans}. Observe that \begin{eqnarray}\label{positivity}
					-\Delta(\epsilon\phi_1)-h(\epsilon\phi_1)f&<&0 \nonumber\\
					&=&-\Delta v-h(v)f \nonumber
				\end{eqnarray}
				since (i) $\phi_1>0$ and a choice of sufficiently small $\epsilon>0$, (ii) the nonincreasing nature of $h$ and (iii) $v$ is a solution to ($\ref{e7}$). Hence, we have  $v > 0$ in $\Omega$.
				Since $\mu$ is a nonnegative, bounded, Radon measure we get the following inequality.
				$$-\int_\Omega v\Delta \varphi\leq\int_{\Omega}h(v)f\varphi +\int_{\Omega}\varphi d\mu,~ \forall\varphi\in C_0^2(\bar{\Omega}), ~\varphi\geq0$$
				and hence $v$ is a subsolution to the problem $\eqref{e1}$. We now look for a supersolution to the problem in $\eqref{e1}$. Let $w$ be the solution of
				\begin{eqnarray}\label{e8}	
					-\Delta w&=& \mu ~\text{in}~\Omega,\nonumber\\
					w &=& 0 ~\text{on}~ \partial\Omega.	
				\end{eqnarray}
				Since $\mu\geq 0$, by the maximum principle on Laplacian we have $w\geq 0$.
				Let us denote $z=w + v$, where $v$ is a solution to ($\ref{e7}$). Thus
				\begin{align}
				-\int_\Omega z\Delta \varphi&=-\int_\Omega w\Delta \varphi -\int_\Omega v\Delta \varphi\nonumber\\
				&=\int_{\Omega}h(v)f\varphi +\int_{\Omega}\varphi d\mu,~ \forall\varphi\in C_0^2(\bar{\Omega}).\nonumber
				\end{align}
				We know that $w$ is nonnegative and hence we have $0<h(z)\leq h(v)$. Therefore,
				$$\int_{\Omega}h(z)f\varphi +\int_{\Omega}\varphi d\mu\leq \int_{\Omega}h(v)f\varphi +\int_{\Omega}\varphi d\mu,~ \forall\varphi\in C_0^2(\bar{\Omega}),~ \varphi\geq0,$$
				i.e., $z$ is a positive function in $L^1(\Omega)$ such that $h(z)\leq h(v)\in L^1(\Omega)$ and
				$$-\int_\Omega z\Delta \varphi\geq\int_{\Omega}h(z)f\varphi+\int_{\Omega}\varphi d\mu,~ \forall\varphi\in C_0^2(\bar{\Omega}),~ \varphi\geq0.$$
				Therefore, $z$ is a supersolution to $\eqref{e1}$. We can now apply Theorem $\ref{t2}$ to conclude that there exists a  solution $u$ to problem $\eqref{e1}$ in the sense of Definition $\ref{d1}$.				
			\end{proof}
		\end{theorem}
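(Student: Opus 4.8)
The plan is to deduce the statement from Theorem \ref{t2} by exhibiting an \emph{ordered} pair consisting of a subsolution $\underline{u}$ and a supersolution $\bar{u}$ of \eqref{e1} in the sense of Definition \ref{d2}, and then invoking the sub/supersolution existence result.

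First I would build the subsolution. Consider the auxiliary purely singular problem $-\Delta v = h(v)f$ in $\Omega$, $v=0$ on $\partial\Omega$. Its solvability in $L^1(\Omega)$ with $fh(v)\in L^1(\Omega)$ follows from the very same Schauder fixed point scheme used in the proof of Theorem \ref{t2} (equivalently, by passing to the limit in the approximating problems \eqref{eq2}, with Lemma \ref{l4} supplying the local lower bound needed to control the singular term). To see $v>0$ in $\Omega$, I would compare $v$ with $\varepsilon\phi_1$, where $\phi_1>0$ is the first eigenfunction of $-\Delta$ with eigenvalue $\lambda_1$: since $h$ is nonincreasing with $\lim_{s\to 0^+}h(s)\in(0,\infty]$ and $f>0$ on $\bar\Omega$, for $\varepsilon$ small enough $-\Delta(\varepsilon\phi_1)=\lambda_1\varepsilon\phi_1$ is strictly dominated by $h(\varepsilon\phi_1)f$ throughout $\Omega$, whence $\varepsilon\phi_1\le v$ and in particular $v>0$. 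Finally, because $\mu\ge0$, testing the equation for $v$ against nonnegative $\varphi\in C_0^2(\bar\Omega)$ gives $-\int_\Omega v\,\Delta\varphi=\int_\Omega h(v)f\varphi\le\int_\Omega h(v)f\varphi+\int_\Omega\varphi\,d\mu$, so $v$ is a subsolution for \eqref{e1}.

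Next I would construct the supersolution as $z=v+w$, where $w$ solves the linear measure problem $-\Delta w=\mu$ in $\Omega$, $w=0$ on $\partial\Omega$, which is solvable by $\cite{Stampacchia}$. By the maximum principle $w\ge0$, so $z\ge v>0$ (hence automatically $\underline{u}=v\le z=\bar{u}$), and the monotonicity of $h$ gives $0<h(z)\le h(v)\in L^1(\Omega)$. Adding the equations for $v$ and $w$ yields, for nonnegative $\varphi\in C_0^2(\bar\Omega)$, $-\int_\Omega z\,\Delta\varphi=\int_\Omega h(v)f\varphi+\int_\Omega\varphi\,d\mu\ge\int_\Omega h(z)f\varphi+\int_\Omega\varphi\,d\mu$, so $z$ is a supersolution for \eqref{e1}. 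With $\underline{u}=v$ and $\bar{u}=z$ ordered sub- and supersolutions in hand, Theorem \ref{t2} delivers a solution $u$ of \eqref{e1} in the sense of Definition \ref{d1} with $v\le u\le z$; in particular $u>0$ a.e. in $\Omega$, which is the claimed nonnegative solution.

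The step I expect to be the main obstacle is the first one: rigorously establishing existence of, and especially the strict positivity of, the auxiliary solution $v$ of $-\Delta v=h(v)f$, together with the $L^1(\Omega)$-integrability of the singular datum $fh(v)$ near $\partial\Omega$ (here the regularity $f\in C^\beta(\bar\Omega)$, $f>0$ on $\bar\Omega$, and $0<\gamma<1$ are used, as is the comparison with $\varepsilon\phi_1$). Once the subsolution is under control, the construction of the supersolution $z=v+w$ and the final application of Theorem \ref{t2} are essentially routine.
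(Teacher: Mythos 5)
Your proposal is correct and follows essentially the same route as the paper: the same auxiliary singular problem $-\Delta v=h(v)f$ solved by the Schauder argument of Theorem \ref{t2}, positivity of $v$ via comparison with $\epsilon\phi_1$, the observation that $\mu\geq 0$ makes $v$ a subsolution, the supersolution $z=v+w$ with $-\Delta w=\mu$ and the monotonicity $h(z)\leq h(v)$, and finally Theorem \ref{t2} applied to the ordered pair. Your version is, if anything, slightly more explicit than the paper about how the eigenfunction comparison yields $\varepsilon\phi_1\leq v$, but this is the same argument.
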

		\subsection{Relaxation of assumption on $f$}
 Theorem $\ref{t3}$ has been proved by assuming a strong regularity on $f$, i.e. $f$ belongs to $C^{\beta}(\bar{\Omega})$ for some $0 <\beta< 1$. In this subsection we relax our assumption on $f$ in order to prove the existence of solution in the sense of Definition $\ref{d1}$.\\
		For a fixed $\delta>0$, let us define  $\Omega_\delta=\{x\in\Omega :\text{dist}(x,\partial\Omega)<\delta \}$ and let $f$ be an almost everywhere positive function in $L^1(\Omega)\cap L^\infty(\Omega_\delta)$.
		\begin{theorem}
			Let $f\in L^1(\Omega)\cap L^{\infty}(\Omega_\delta)$ such that $f>0$ a.e. in $\Omega$ for some fixed $\delta>0$. Then there exists a solution to the problem ($\ref{e1}$) in the sense of Definition $\ref{d1}$.
			\begin{proof}
				We consider the following sequence of problems
				\begin{eqnarray}	
					-\Delta v_n&=& h_n\left(v_n+\frac{1}{n}\right)f_n ~\text{in}~\Omega,\nonumber\\
					v_n &=& 0 ~\text{on}~ \partial\Omega.	
				\end{eqnarray}
				In Lemma $\ref{l4}$ we have proved that the nondecreasing sequence $(v_n)$ converges to a solution of the problem in ($\ref{e7}$) and for each fixed $n$, the function $v_n$ belongs to $L^{\infty}(\Omega)$. So the function $h_1(v_1+1)f_1$ also belongs to $L^{\infty}(\Omega)$. We now can apply the Lemma 3.2 in $\cite{Brezis}$ so as to obtain
				\begin{align}
				\frac{v_1(x)}{d(x)}&\geq C\int_\Omega d(y)f_1(y) h_1\left(\parallel v_1\parallel_{L^{\infty}(\Omega)}+1\right)dy \nonumber\\
				&\geq C>0\nonumber
				\end{align}
			 for every $x$ in $\Omega$,
				where $d(x) = d(x, \partial\Omega)$. Thus, we have $v(x)\geq v_1(x)\geq Cd(x)$, a.e. on $\Omega$. Therefore, as $f\in L^{\infty}(\Omega_\delta)$, we have $h(v)f\in L^1(\Omega)$ because (i) $h(v)f\leq h(Cd(x))f$ and (ii) $h(Cd(x))f$ is integrable for every $\gamma<1$. Hence, the subsolution is bounded from below. Proceeding further by using the arguments used in the proof of the Theorem $\ref{t3}$, one can produce a super solution to $\eqref{e1}$. Now using the result proved in Theorem $\ref{t2}$, we conclude the existence of a solution to the problem in $\eqref{e1}$ in the sense of Definition $\ref{d1}$.			
			\end{proof}
		\end{theorem}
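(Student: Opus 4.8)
The plan is to mimic the proof of Theorem~\ref{t3}: produce a subsolution $\underline{u}$ and a supersolution $\bar{u}$ to \eqref{e1} in the sense of Definition~\ref{d2} with $\underline{u}\le\bar{u}$, and then invoke Theorem~\ref{t2}. The only genuinely new point is that, with $f$ merely in $L^1(\Omega)\cap L^\infty(\Omega_\delta)$, one must still exhibit a subsolution $v$ for which $fh(v)\in L^1(\Omega)$; the construction of the supersolution and the final fixed point argument then go through exactly as before.

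First I would take the approximating sequence $(v_n)$ solving $-\Delta v_n = h_n(v_n+\tfrac1n)f_n$ with $v_n=0$ on $\partial\Omega$. By Lemma~\ref{l4} this sequence is nondecreasing, each $v_n\in L^\infty(\Omega)$, and $v_n\ge v_1>0$ in $\Omega$ with $v_1\ge C_K>0$ on every $K\subset\subset\Omega$; its a.e.\ pointwise limit $v$ is a very weak solution of \eqref{e7} with $v\ge v_1$. Since $h_1(v_1+1)f_1\in L^\infty(\Omega)$, I would then apply the boundary estimate of Brezis (Lemma~3.2 in \cite{Brezis}) — this is where the $C^{2,\beta}$ regularity of $\partial\Omega$ enters — to obtain $v_1(x)\ge C\,d(x)$, where $d(x)=\text{dist}(x,\partial\Omega)$, and hence $v(x)\ge C\,d(x)$ a.e.\ in $\Omega$.

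The key step is then $fh(v)\in L^1(\Omega)$, which I would verify by splitting $\Omega=\Omega_\delta\cup(\Omega\setminus\overline{\Omega_\delta})$. On the interior piece $\Omega\setminus\overline{\Omega_\delta}$, Lemma~\ref{l4} gives $v\ge C_\delta>0$, so by monotonicity of $h$ we get $fh(v)\le h(C_\delta)\,f\in L^1$. On the boundary strip $\Omega_\delta$, monotonicity of $h$ together with the growth condition \eqref{eqq1} yields $h(v(x))\le h(Cd(x))\le C_1\,(Cd(x))^{-\gamma}$ for $d(x)$ small, and since $f\in L^\infty(\Omega_\delta)$ this gives $fh(v)\le C'\,\|f\|_{L^\infty(\Omega_\delta)}\,d(x)^{-\gamma}$, which is integrable on $\Omega_\delta$ precisely because $\gamma<1$ ($d(x)^{-\gamma}$ is integrable near a $C^2$ boundary iff $\gamma<1$). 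Consequently $fh(v)\in L^1(\Omega)$, and since $\mu\ge0$ one has $-\int_\Omega v\,\Delta\varphi\le\int_\Omega fh(v)\varphi+\int_\Omega\varphi\,d\mu$ for all $0\le\varphi\in C_0^2(\bar\Omega)$, i.e.\ $v$ is a subsolution of \eqref{e1}.

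Finally I would build the supersolution as in Theorem~\ref{t3}: let $w$ solve $-\Delta w=\mu$ in $\Omega$, $w=0$ on $\partial\Omega$, so $w\ge0$ by the maximum principle, and set $z=v+w$. Then $z\ge v>0$, $0<h(z)\le h(v)$ so $fh(z)\in L^1(\Omega)$, and $-\int_\Omega z\,\Delta\varphi=\int_\Omega fh(v)\varphi+\int_\Omega\varphi\,d\mu\ge\int_\Omega fh(z)\varphi+\int_\Omega\varphi\,d\mu$ for $\varphi\ge0$, so $z$ is a supersolution with $v\le z$. Theorem~\ref{t2} then delivers a solution $u$ of \eqref{e1} in the sense of Definition~\ref{d1} with $v\le u\le z$. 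The main obstacle is exactly the $L^1$ control of $fh(v)$ up to the boundary: without $f\in L^\infty(\Omega_\delta)$ the product of the $L^1$ bulk singularity of $f$ with the boundary singularity $d(x)^{-\gamma}$ of $h(v)$ need not be integrable, so both the Brezis lower bound $v\gtrsim d$ and the restriction $\gamma<1$ are indispensable there.
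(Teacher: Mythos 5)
Your proposal is correct and follows essentially the same route as the paper: approximate by $(v_n)$, use Lemma 3.2 of \cite{Brezis} to get $v\geq v_1\geq Cd(x)$, deduce $fh(v)\in L^1(\Omega)$ from $f\in L^\infty(\Omega_\delta)$ and $\gamma<1$, and then reuse the supersolution $z=v+w$ and Theorem \ref{t2}. Your explicit splitting of $\Omega$ into $\Omega_\delta$ and its complement merely spells out the integrability check that the paper states more tersely.
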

		\section{Appendix}
		We prove the Kato type inequality for the problem
		\begin{eqnarray}\label{eqq0}	
			-\Delta u&=& h(u)f+\mu ~\text{in}~\Omega,\nonumber\\
			u &=& 0 ~\text{on}~ \partial\Omega,\\
			u &>& 0 ~\text{in}~ \Omega,\nonumber	
		\end{eqnarray}
		where $f>0$ and $u\in L^1(\Omega)$ is a very weak solution with $u>0$ a.e. in $\Omega$ and $fh(u)\in L^1(\Omega)$ .\\
		Let $u_1$ and $u_2$ are two very weak solutions to the problem $\eqref{eqq0}$ with measure sources $\mu_1$ and $\mu_2$, respectively. Hence, $u_1,u_2\in L^1(\Omega)$ and $h(u_1)f,h(u_2)f\in L^1(\Omega)$. Then for every $\varphi\in C_0^2(\bar{\Omega})$, the very weak formulations corresponding to the problem $\eqref{eqq0}$ are
		$$-\int_{\Omega}u_1\Delta\varphi = \int_{\Omega}h(u_1)f\varphi +\int_{\Omega}\varphi d\mu_1$$ and $$-\int_{\Omega}u_2\Delta\varphi = \int_{\Omega}h(u_2)f\varphi +\int_{\Omega}\varphi d\mu_2.$$
		On taking the difference between the two formulations we get
		$$-\int_{\Omega}(u_1-u_2)\Delta\varphi = \int_{\Omega}f(h(u_1)-h(u_2))\varphi +\int_{\Omega}\varphi (d\mu_1-d\mu_2).$$
		We see that $(u_1-u_2)$ is a very weak solution to the problem
		\begin{eqnarray}
			-\Delta(u_1-u_2) &=& f(h(u_1)-h(u_2)) + (\mu_1-\mu_2)	
			~\text{in}~\Omega,\nonumber\\
			u_1-u_2 &=& 0 ~\text{on}~ \partial\Omega.\nonumber	
		\end{eqnarray}
	Using the Proposition 1.5.4 (Kato's inequality) of  $\cite{Marcus}$, we can observe that
	\begin{equation}\label{l}
	-\int_{\Omega}(u_1-u_2)^+\Delta\varphi \leq \int_{\Omega}f(h(u_1)-h(u_2)) (sign_+ (u_1-u_2))\varphi +\int_{\Omega}\varphi (d\mu_1-d\mu_2),
	\end{equation}
		where, $sign_+ (u_1-u_2)=\chi_{\{x\in\Omega:u_1(x)\geq u_2(x)\}}$.
		Let us consider a $\varphi_0$ such that $-\Delta\varphi_0=1$ in $\Omega$ and $\varphi_0=0$ on $\partial\Omega$. Now the inequality in $\eqref{l}$ becomes
		\begin{align}\label{kato1}
			& \int_{\Omega}(u_1-u_2)^+  \leq \int_{\Omega}f(h(u_1)-h(u_2))(sign_+(u_1-u_2))\varphi_0 +\int_{\Omega}\varphi_0 d(\mu_1-\mu_2).
		\end{align}
		Similarly, it is easy to obtain 
		\begin{align}\label{kato2}
			& \int_{\Omega}(u_2-u_1)^+  \leq \int_{\Omega}f(h(u_2)-h(u_1))(sign_+(u_2-u_1))\varphi_0 +\int_{\Omega}\varphi_0 d(\mu_2-\mu_1).
		\end{align}
		Equations $\eqref{kato1}$ and $\eqref{kato2}$ are our required Kato type inequalities.\\
		We will now prove that if $h$ is strictly decreasing then $\eqref{eqq0}$ has a unique very weak solution. For if $u_1,u_2$ are two very weak solutions corresponding to the same measure data then  from $\eqref{kato1}$ considered over $A=\{x\in\Omega :u_1(x)\geq u_2(x)\}$
		we have 
		\begin{align}\label{appkato1}
			& \int_{A}(u_1-u_2)^+ +\int_{A}f(h(u_2)-h(u_1))\varphi_0 \leq  0.
		\end{align}
		Similarly from $\eqref{kato2}$ considered over $B=\{x\in\Omega :u_2(x)\geq u_1(x)\}$ we have 
		\begin{align}\label{appkato2}
			& \int_{B}(u_2-u_1)^+ +\int_{B}f(h(u_1)-h(u_2))\varphi_0 \leq  0.
		\end{align}
		Since the first term in $\eqref{appkato1}$, $\eqref{appkato2}$ are nonnegative, we get $0\leq \int_{A}f(h(u_2)-h(u_1))\varphi_0\leq 0$ and $0\leq \int_{B}f(h(u_1)-h(u_2))\varphi_0 \leq 0$. This implies that $h(u_1)=h(u_2)$ a.e. in $\Omega$. Due to the strictly decreasing nature of $h$, we conclude that $u_1=u_2$ a.e. in $\Omega$.
		\section*{Acknowledgement}
		Two of the authors, A. Panda and S. Ghosh, thanks for the financial assistantship received to carry out this research work from the Ministry of Human Resource Development(M.H.R.D.), Govt. of India and the Council of Scientific and Industrial Research(C.S.I.R.), Govt. of India respectively. This is also to declare that there are no financial conflict of interest whatsoever. Finally the	 authors thank the anonymous referee for the constructive comments and suggestions.

	\end{document}